\newtheorem{theorem}{Theorem}[section]
\newtheorem{thm}[theorem]{Theorem}
\newtheorem{lem}[theorem]{Lemma}
\newtheorem{proposition}[theorem]{Proposition}
\newtheorem{corollary}[theorem]{Corollary}
\theoremstyle{definition}
\newtheorem{defn}[theorem]{Definition}
\theoremstyle{remark}
\newtheorem{rem}[theorem]{Remark}
\numberwithin{equation}{section}
 \DeclareMathAlphabet{\mathpzc}{OT1}{pzc}{m}{it}
  \newcommand{\dif}{\mathrm{d}}
 \newcommand{\e}{\varepsilon}
 \newcommand{\p}{\partial}
 \newcommand{\R}{\mathbb{R}}
 \newcommand{\PP}{\mathbb{P}}
 \newcommand{\mcl}{\mathcal}
 \newcommand{\Be}{\begin{equation}}
 \newcommand{\Ee}{\end{equation}}
 \newcommand{\Bs}{\begin{split}}
 \newcommand{\Es}{\end{split}}
  \newcommand{\Bes}{\begin{equation*}}
 \newcommand{\Ees}{\end{equation*}}
 \newcommand{\BT}{\begin{thm}}
 \newcommand{\ET}{\end{thm}}
 \newcommand{\Bp}{\begin{proof}}
 \newcommand{\Ep}{\end{proof}}
 \newcommand{\BL}{\begin{lem}}
 \newcommand{\EL}{\end{lem}}
 \newcommand{\BP}{\begin{proposition}}
 \newcommand{\EP}{\end{proposition}}
 \newcommand{\BC}{\begin{corollary}}
 \newcommand{\EC}{\end{corollary}}
 \newcommand{\BR}{\begin{rem}}
 \newcommand{\ER}{\end{rem}}
 \newcommand{\BD}{\begin{defn}}
 \newcommand{\ED}{\end{defn}}
 \newcommand{\BI}{\begin{itemize}}
 \newcommand{\EI}{\end{itemize}}
 \newcommand{\re}{{\rm e}}
 \newcommand{\Om}{\mcl O}
\begin{document}
\title[Blowup of stochastic Gierer-Meinhardt system]
{Finite time blowup of the stochastic shadow Gierer-Meinhardt System}

\author[F. Li]{Fang Li}
\address{Center for Partial Differential Equations, East China Normal University,
500 Dongchuan Road, Shanghai, 200241, China}
\email{fangli0214@gmail.com}

\author[L. Xu]{Lihu Xu}
\address{Faculty of Science and Technology,
University of Macau, E11
Avenida da Universidade, Taipa,
Macau, China}
\email{xulihu2007@gmail.com}

\thanks{FL is   supported by Chinese NSF (No. 11201148), Shanghai Pujiang Program (No. 13PJ1402400).}

\thanks{LX is supported by the grant SRG2013-00064-FST. He would like to thank the hospitality of
Center for Partial Differential Equations at ECNU and part of his work was done during visiting CPDE.}

\begin{abstract} \label{abstract}

By choosing some special (random) initial data, we prove that
with probability $1$,  the stochastic shadow Gierer-Meinhardt system blows up pointwisely in finite time.
We also give a (random)
upper bound for the blowup time and some estimates about this bound. By increasing the amplitude of the initial data, we can get
the blowup in any short time with positive probability.
\\ \\
{\bf Keywords}: Stochastic shadow Gierer-Meinhardt system, Finite time blowup, Brownian motions, It$\hat{o}$ formula.
\\
{\bf Mathematics Subject Classification (2000)}: \ {60H05, 60H15, 60H30}.
\end{abstract}

\maketitle

\section{Introduction}
Many of the mathematical models that have been proposed for
the study of population dynamics, biochemistry, morphogenesis and
other fields, take the following form:
\begin{equation} \label{e:GenRD}
\left\{
\begin{array}{ll}
\p_t u=d_1\Delta u+f(u,v)  &\mbox{ in } \mcl O \times (0,T),\\[3mm]
\tau \p_t v =d_2\Delta v+g(u,v)  &\mbox{ in } \mcl O \times (0,T),\\[3mm]
\partial_\nu u=\p_\nu v=0 & \mbox{ on } \p \mcl O \times (0,T),
\end{array} \right. \end{equation}
where $\Delta=\sum_{i=1}^n \p^2_{x_i}$ is Laplace operator, $\mcl O$ is a bounded smooth
domain in $\R^n$ with unit outward normal vector $\nu$ on its boundary $\p \mcl O$; the two
positive constants $d_1,d_2$ are the diffusion rates of two substances $u$ and $v$ respectively;
$\tau>0$ is the number tuning response rate of $v$ related to the change of $u$; $f,g$ are both smooth
functions referred to as the reaction terms.

As we choose
\
\Be \label{e:FGCho}
f(u,v)=\frac{u^p}{v^q}, \ \ \ g(u,v)=\frac {u^r}{v^s},
\Ee
with $p>0, q>0, r>0, s \ge 0$ satisfying the condition:
\
\Be \label{e:PQRel}
0<\frac{p-1}{r}<\frac{q}{s+1},
\Ee
Eq. \eqref{e:GenRD} is the well known Gierer-Meinhardt system.
When '$\Delta$'s are removed, the corresponding ODEs have a stable
equilibrium solution $(1,1)$. The condition \eqref{e:PQRel} is imposed
so that $(1,1)$ becomes \emph{unstable} due to the two diffusion terms with
$d_1$ small and $d_2$ large. This idea was proposed by Turing in 1952
and used to explain the onset of pattern formation by an instability of an unpatterned state leading to a pattern.
It is now commonly called {\it Turing diffusion-driven
instability} (\cite{t1}). Based on this idea, the Gierer-Meinhardt system \eqref{e:GenRD}-\eqref{e:PQRel} was formulated in 1972 \cite{gm1} to model the regeneration phenomena of {\it hydra}.

The  Gierer-Meinhardt  system \eqref{e:GenRD}-\eqref{e:PQRel} is usually called \emph{full} system, its dynamics remains far from being   understood at this time.
First result in this direction was due to Rothe in 1984 \cite{r1}, but only for a very special case $n=3$, $p=2$, $q=1$, $r=2$ and $s=0$.  In 1987, a result for a related system was obtained in \cite{MT}.  The nearly optimal resolution for the
global existence issue came in 2006 with an elementary and elegant proof by Jiang \cite{j1}. In \cite{j1}, the global existence   was established for the range ${p-1\over r} < 1$. This only leaves the critical case ${p-1\over r} = 1$ still open, since it has been known already that in case ${p-1\over r} >1$
blow-up can occur   even   for the corresponding kinetic system (\cite{nst}).
\vskip 3mm

 When $d_2 \rightarrow \infty$, we expect that $v$ tends to be space-homogeneous,
 i.e., $v(x,t)$ will be a spatially constant but time dependent function $\xi(t)$. Now the above Gierer-Meinhardt system
 is replaced by the following \emph{shadow system}:
 \
 \begin{equation} \label{gmshadow0} \left\{
\begin{array}{ll}
\p_t u=d_1\Delta u-u+\frac{u^p}{\xi^q} & \mbox{ in } \mcl O \times (0,T),\\[2mm]
\tau \frac{\dif \xi}{\dif t}=\left(-\xi+\frac{\overline{u^r}}{\xi^s}\right) & \mbox{ in }  (0,T), \\[2mm]
\frac{\partial u}{\partial\nu}=0 & \mbox{ on }\partial\Om \times (0,T),
\end{array} \right. \end{equation}
where $\overline{u^r}=\frac{1}{|\Om|}\int_{\Om}u^r \, \dif x$ with $|\Om|$ being the volume of $\Om$.   This idea was suggested by Keener (\cite{k1})  and the name "shadow system'' was proposed by Nishiura (\cite{nish2}).

The dynamics  \eqref{gmshadow0} has been less well studied than the full Gierer-Meinhardt system.
Global existence and finite-time blow-up have firstly been explored by the first author and Ni (\cite{ln1}) in 2009.
In particular, they show that for $\frac{p-1}{r}<\frac{2}{n+2}$ there is a unique global solution, whereas for $\frac{p-1}{r}>\frac{2}{n}$ blow-up can occur provided that $p=r$, $\tau=s+1-q$. Later, Phan showed that Eq. \eqref{gmshadow0} also admits a global solution in the case $\frac{p-1}{r}=\frac{2}{n+2}$ (\cite{Pha12}). The first author and Yip continue the work in \cite{ln1} and improve the earlier results concerning blowup solutions to the optimal case  $\frac{p-1}{r}>\frac{2}{n+2}$.

Since the existence and blowup of the
solutions do not depend on the numbers $d_1$ and $\tau$. Without loss of generality, we shall assume $d_1=1$ and $\tau=1$ throughout the rest of  this paper.

The purpose of this paper is to  study the shadow Gierer-Meinhardt system with random migrations
with the following form:
\begin{equation} \label{gmshadow} \left\{
\begin{array}{ll}
\p_t u=\Delta u-u+\frac{u^p}{\xi^q} & \mbox{ in } \mcl O \times (0,T),\\[2mm]
{\dif \xi}=\left(-\xi+\frac{\overline{u^r}}{\xi^s}\right) \dif t+\xi \dif B_t & \mbox{ in }  (0,T), \\[2mm]
\frac{\partial u}{\partial\nu}=0 & \mbox{ on }\partial\Om \times (0,T), \\[2mm]
u(0)=u_0 & \mbox{ in }\Om, \\ [2mm]
  \xi(0)=\xi_0,
\end{array} \right. \end{equation}
where $\xi \dif B_t$ can be explained as random migrations and $B_t$ is a one-dimensional standard Brownian motion.
Due to the random
effects, we need to introduce the sample space $\Omega$ and re-define
$$u(t,x,\omega): \R^+ \times \mcl O \times \Omega \rightarrow \R^+, \ \ \ \xi(t,\omega): \R^+ \times \Omega \rightarrow \R^+\setminus \{0\}. $$

To our knowledge, there seem only two papers in the research of stochastic Gierer-Meinhardt type systems.
One is \cite{ks10}, which studied a system including two coupled stochastic PDEs with bounded and Lipschitz nonlinearity.
\cite{ks10} only proved the \emph{local} existence of the \emph{positive} stochastic solution by Da Prato-Zabczyk's approach (\cite{dpz92}).
The other is \cite{WXZZ14} established the \emph{global} existence of the strong positive solution and the large deviation principle
for Eq. \eqref{gmshadow}.
\vskip 3mm

 We shall study in this paper the blowup problem of Eq. \eqref{gmshadow} under quite general assumptions.
When $p \ge r$ and $\frac{p-1}{r}>\frac{2}{n+2}$, we show that with probability $1$, Eq. \eqref{gmshadow}
blows up pointwisely if we choose some suitable (random) initial data. We also give a (random)
upper bound for the blow up time and consequently obtain a probabilistic estimate of this blow up.

To our knowledge, there are not many results for the blow up of stochastic systems.
The work \cite{Cho09} proved that the 2nd moment of the solution of some nonlinear
wave equations blow up, while \cite{Cho11} gave a nice criterion for the blow up of some
stochastic reaction-diffusion equations under $p$th moments. As pointed out in \cite{Cho11},
the blowup under $p$th moments even does not imply the pathwise blowup with a
positive probability. \cite{ChLi12} extended the result in \cite{Cho11} to the case of stochastic parabolic
equations with delay. Most recently, Chow and Khasminski established an almost sure
blowup result for a family of SDEs (\cite{ChKh14}).
\cite{MuSo93} and \cite{Mue96} studied stochastic heat equations and
showed that the noises can produce blowup with positive probability.
In contrast, our blowup results depend on the
choices of initial data, it is inspired by the deterministic work of \cite{HuYi95}, \cite{LiYi14} and \cite{ln1}. A special (random) data can, with probability $1$,
lead to a blowup of the SPDEs
solutions. By increasing the amplitude of the initial data, we can get
the blowup in any short time with positive probability.
\vskip 3mm

Both probabilistic and PDE's methods play important roles in our approach.
It$\hat{o}$ formula in the proof of Lemma \ref{l:MonHxi} below is the key point for finding
the monotone stochastic process $\hat \xi(t)$, which paves the way to applying classical PDE
techniques and estimating the upper bounds of blow up time.
For the PDE's argument, we follow the approaches shown in \cite{ln1} and \cite{LiYi14}.

The organization of the paper is as follows. In section 2, we introduce some
notations and give some prerequisite lemmas. To show our approach
more transparently, we prove a blowup theorem under some additional assumption in section 3.
The 4th section removes the assumption and build the general blowup result by integral estimates.

\section{Some auxiliary lemmas and a monotone stochastic process $\hat \xi(t)$}
From now on, we assume $\mcl O=B_1(0)$, the unit open ball in $\R^n$ with zero center.
For the notatioal simplicity, write $v(t,z)=e^t u(t,z)$ for all $t>0$ and $z \in \bar B_1(0)$ and
\
{\Be   \label{e:KtDef}
K(t)=\frac{e^{-(p-1)t}}{\xi^q(t)},
\Ee}
 then
 \begin{equation} \label{e:VEqn} \left\{
\begin{array}{ll}
\p_t v=\Delta v+K(t) v^p &\mbox{ in } B_1(0),\\[2mm]
\dif {\xi}=\left(-\xi+e^{-r t} \frac{\overline{v^r}}{\xi^s}\right)\dif t+\xi \dif B_t, & \\[2mm]
\frac{\partial v}{\partial\nu}=0 & \mbox{ on }\ \{z=1\}, \\[2mm]
v(0)=u_0, \\[2mm]
  \xi(0)=\xi_0,
\end{array} \right. \end{equation}
To study the blow up of Eq. \eqref{gmshadow} , we only need to study that of Eq. \eqref{e:VEqn}. So we shall concentrate on
the blow up of $v$ and $\xi$ in the sequel.

Write
$$B^{*}_t=\sup_{0 \le s \le t} |B_s| \ \ \ \ \forall t>0,$$
it is well known (\cite[p. 96]{KaSh91}) that for any $A>0$,
\
\Bes
\PP\left(B^{*}_t \ge A\right) \le \frac{\sqrt t}{\sqrt{2 \pi}} \frac{4} A\re^{-\frac{A^2}{2t}}  \ \ \ \ \ \ \ \ \ \ \forall \ t>0.
\Ees
Hence,
\
\Bes
\PP\left(B^{*}_t<\infty\right)=1-\lim_{A \rightarrow \infty} \PP\left(B^{*}_t \ge A\right)=1 \ \ \ \ \ \ \forall \ t>0.
\Ees
For every $t>0$, denote $\mcl N_t=\{\omega: B^*_t=\infty\}$, it is clear that $\PP(\mcl N_t)=0$.
Take $t=1,2,...$, it is easy to see that $\mcl N_t \subset \mcl N_m$ for all $t \le m$. Define
$\mcl N=\lim_{m\rightarrow \infty} \mcl N_m$, we have $\PP(\mcl N)=\lim_{m \rightarrow \infty} \PP(\mcl N_m)=0$.
Hence, for all $\omega \in \Omega \setminus \mcl N$,
\
$$
B^*_t(\omega)<\infty \ \ \ \ \ \ \ \ \  \forall \ t>0.
$$
From the above observation, without loss of generality, we can assume that for all $\omega \in \Omega$,
\
\Be  \label{e:Bt*Fin}
B^{*}_t(\omega)<\infty \ \ \ \ \ \ \ \ \  \forall \ t>0.
\Ee
\vskip 3mm


For all $x \in \R^n$, denote $z=|x|$.
Consider the following isotropic function
$$\phi(z)=\begin{cases}
z^{-\alpha}, & \delta \le z \le 1, \\
\delta^{-\alpha}(1+\frac \alpha 2)-\frac \alpha 2 \delta^{-\alpha-2} z^2, & 0 \le z<\delta,
\end{cases}$$
with some $\delta \in (0,1)$ and
\Be \label{e:AlpDef}
\alpha=\frac{2}{p-1}.
\Ee
 it is easy to check that
\begin{equation} \label{e:PPhi>0}
\p^2_z \phi+\frac{n-1}{z} \p_z \phi+\alpha n \phi^p \ge 0
\end{equation}
holds for all $z \in (0,1)$.

Take
$$v_0=\gamma \phi$$
as the initial data of Eq. \eqref{e:VEqn}, where $\gamma>0$ is some (random) number. This special choice of initial data is
inspired by the deterministic work of \cite{HuYi95}, \cite{LiYi14} and \cite{ln1}.
Since the initial data is isotropic in the space,
then the solution $v(x,t)$ is
also spatially isotropic for all $t>0$. Hence, we denote the solution by $v(z,t)$ and Eq. \eqref{e:VEqn} can be
rewritten as
\
\begin{equation} \label{e:VZEqn}
\left\{
\begin{array}{ll}
\p_t v=\p^2_z v+\frac{n-1}{z}\p_z v+K(t) v^p &\mbox{ in } B_1(0),\\[2mm]
\dif {\xi}=\left(-\xi+e^{-r t}\frac{\overline{v^r}}{\xi^s}\right)\dif t+\xi \dif B_t, & \\[2mm]
\frac{\partial v}{\partial z}=0 & \mbox{ on } \ \{z=1\}, \\[2mm]
v(0)=v_0, \\[2mm]
  \xi(0)=\xi_0,
\end{array} \right. \end{equation}

\noindent By a Banach fixed point argument as in \cite{WXZZ14}, Eq. \eqref{e:VZEqn} has a unique \emph{local} solution. The next lemma is about the property of the solution.
\
\begin{lem}  \label{l:ULowBou}
Let $v$ be the solution to Eq. \eqref{e:VEqn} on $[0,T]$.
Then the following statements hold:

(i).
$v(t) \ge \gamma$ for all $0 \le t \le T$.

(ii).
$\p_z v(z,t) \le 0$ for all $0 \le t \le T$ and all $0<z<1$.

(iii). For all $\beta \in (0,1]$, we have
$z^n v^{\beta}(z,t) \le \overline{v^\beta}(t)$ for all $0 \le t \le T$ and all $0<z<1$.

(iv).
$\p_z v(\frac 12,t) \le - C_0  2^{n-1}$ for all $0 \le t \le T$, where $C_0>0$ depends on $\gamma$.
\end{lem}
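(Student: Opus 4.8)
The plan is to argue pathwise. Fixing $\omega$ in the full-measure set of \eqref{e:Bt*Fin} and using $\xi>0$, the coefficient $K(t)=\re^{-(p-1)t}\xi^{-q}(t)$ is continuous and strictly positive on $[0,T]$, so $v$ is a classical solution of a deterministic radial parabolic equation on $\bar B_1(0)\times[0,T]$ with coefficients bounded there; statements (i)--(iv) will all follow from comparison or maximum-principle arguments, the only recurring technicality being the coordinate singularity of the radial operator at $z=0$, where the $\tfrac{n-1}{z}$ and $\tfrac{n-1}{z^2}$ terms cancel to leading order and are handled in the usual way. For (i) I would observe that the constant $\gamma$ is a subsolution, $\p_t\gamma=0\le\p_z^2\gamma+\tfrac{n-1}{z}\p_z\gamma+K(t)\gamma^p=K(t)\gamma^p$, and that $v_0=\gamma\phi\ge\gamma$ since $\phi\ge1$ on $[0,1]$ (on $[\delta,1]$, $z^{-\alpha}\ge1$; on $[0,\delta)$ the quadratic is decreasing with value $\delta^{-\alpha}\ge1$ at $z=\delta$); then, writing $\psi=v-\gamma$ and $v^p-\gamma^p=p\theta^{p-1}\psi$ for an intermediate $\theta$, the equation $\p_t\psi=\p_z^2\psi+\tfrac{n-1}{z}\p_z\psi+K(t)p\theta^{p-1}\psi+K(t)\gamma^p$ has bounded coefficient and non-negative source with $\psi\ge0$ on the parabolic boundary, so the comparison principle gives $\psi\ge0$.

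For (ii) I would put $w=\p_z v$ and differentiate the first line of \eqref{e:VZEqn} in $z$ to get
\[
\p_t w=\p_z^2 w+\frac{n-1}{z}\,\p_z w-\frac{n-1}{z^2}\,w+K(t)\,p\,v^{p-1}\,w .
\]
On the parabolic boundary $w(z,0)=\gamma\phi'(z)\le0$ ($\phi$ is non-increasing, its two pieces matching in $C^1$ at $z=\delta$), $w(1,t)=0$ by the Neumann condition and $w(0,t)=0$ by symmetry; since the zeroth-order coefficient $-\tfrac{n-1}{z^2}+K(t)pv^{p-1}$ is bounded above on $(0,1)\times[0,T]$, the maximum principle yields $w\le0$, which is (ii). Statement (iii) is then immediate: by (ii) and $\beta>0$, $r\mapsto v^\beta(r,t)$ is non-increasing, so with $\overline{v^\beta}(t)=n\int_0^1 v^\beta(r,t)r^{n-1}\,\dif r$,
\[
\overline{v^\beta}(t)\ \ge\ n\int_0^z v^\beta(r,t)r^{n-1}\,\dif r\ \ge\ n\,v^\beta(z,t)\int_0^z r^{n-1}\,\dif r\ =\ z^n v^\beta(z,t).
\]

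Statement (iv) is the one substantial point, and the plan is to dominate $\p_z v$ by the gradient of the reaction-free flow. Let $v^{\mathrm{h}}$ solve $\p_t v^{\mathrm{h}}=\p_z^2 v^{\mathrm{h}}+\tfrac{n-1}{z}\p_z v^{\mathrm{h}}$ on $B_1(0)$ with the Neumann condition and $v^{\mathrm{h}}(\cdot,0)=\gamma\phi$; by linearity $v^{\mathrm{h}}=\gamma u^{\mathrm{h}}$, where $u^{\mathrm{h}}$ is the same flow started from $\phi$. The difference $d=\p_z v-\p_z v^{\mathrm{h}}$ satisfies
\[
\p_t d-\p_z^2 d-\frac{n-1}{z}\,\p_z d+\frac{n-1}{z^2}\,d\ =\ K(t)\,p\,v^{p-1}\,\p_z v\ \le\ 0,
\]
where the sign is forced by (ii); since $d$ vanishes on the parabolic boundary (same initial data, and $\p_z v$, $\p_z v^{\mathrm{h}}$ both vanish at $z=0$ and $z=1$), the maximum principle gives $\p_z v\le\gamma\,\p_z u^{\mathrm{h}}$. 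Applying the argument of (ii) to the heat flow gives $\p_z u^{\mathrm{h}}\le0$, the strong maximum principle improves this to $\p_z u^{\mathrm{h}}<0$ on $(0,1)\times(0,T]$, and $\p_z u^{\mathrm{h}}(\tfrac12,0)=\phi'(\tfrac12)<0$; hence the continuous function $t\mapsto\p_z u^{\mathrm{h}}(\tfrac12,t)$ is strictly negative on the compact set $[0,T]$, so $\p_z u^{\mathrm{h}}(\tfrac12,t)\le-c_0$ there for some $c_0=c_0(n,p,\delta,T)>0$. This gives $\p_z v(\tfrac12,t)\le-\gamma c_0$ for all $t\in[0,T]$, i.e.\ (iv) with $C_0=\gamma c_0\,2^{1-n}$, which depends on $\gamma$ as stated.

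The hard part is exactly this last step: a bare maximum-principle bound on $\p_z v$ gives strict negativity but not the \emph{uniform} lower bound on $|\p_z v(\tfrac12,t)|$ that (iv) asserts, and the reaction-free flow already satisfies $\p_z v^{\mathrm{h}}(\tfrac12,t)\to0$ as $t\to\infty$. What rescues it is that, once (ii) is known, the reaction term $Kv^p$ only steepens the profile, so $\p_z v$ stays below a \emph{fixed deterministic} function; uniformity on $[0,T]$ then comes for free from compactness together with the strict negativity inherited from $\phi'(\tfrac12)<0$. The remaining points — the coordinate singularity at $z=0$ and the mild incompatibility of $\phi$ with the Neumann condition at $z=1$ — are routine.
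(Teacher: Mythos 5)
Your proposal is correct, and for parts (i)--(iii) it coincides with the standard route (the paper simply cites \cite{LiYi14} for (i)--(ii) and proves (iii) exactly as you do, from the monotonicity in $z$). The genuine difference is in (iv). The paper works with $f(z,t)=z^{n-1}\p_z v$, whose equation $\p_t f=\p^2_z f-\frac{n-1}{z}\p_z f+pK(t)v^{p-1}f$ has no singular zeroth-order term, and applies the strong maximum principle to $f$ \emph{for the full reaction equation}, concluding $f(\tfrac12,t)\le -C_0$ directly; the uniformity in $t$ and the dependence of $C_0$ on $T$ and on the random coefficient $K(t)v^{p-1}$ are left implicit there. You instead use (ii) to give the source term $K(t)pv^{p-1}\p_z v$ a sign and compare $\p_z v$ with the gradient of the reaction-free radial heat flow started from $\gamma\phi$, so that the lower bound becomes $\p_z v(\tfrac12,t)\le\gamma\,\p_z u^{\mathrm h}(\tfrac12,t)\le-\gamma c_0$ with $c_0$ a \emph{deterministic} constant depending only on $n,\delta,p$ and $T$. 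This buys a cleaner accounting of what $C_0$ depends on (and makes explicit the compactness step that the paper's one-line invocation of the strong maximum principle glosses over), at the price of one extra comparison argument and of handling the $\frac{n-1}{z^2}$ singularity in the equation for $\p_z v$ — which, as you note, has the favorable sign where it matters and is controlled at $z=0$ by the symmetry condition, just as the paper's choice of the variable $z^{n-1}\p_z v$ achieves by a change of unknown. Two small points of hygiene: in (i) the lateral boundary carries Neumann, not Dirichlet, data, so the phrase ``$\psi\ge0$ on the parabolic boundary'' should be read as the usual Hopf-lemma argument excluding a negative minimum at $z=1$; and your $C_0$ (like the paper's) inevitably depends on $T$ as well as $\gamma$, which is harmless since the lemma is only ever invoked on the bounded horizons $[0,\theta]$, $[0,\hat t_\lambda]$ with $t_\lambda\le1$.
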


\begin{proof}
The proofs of (i) and (ii) are the same as those in \cite[Lemma 2.1]{LiYi14}.
By (ii), it is easy to see
\
\Bes
\begin{split}
v^\beta(z,t) z^n&=v^\beta(z,t) \int_0^z n r^{n-1} \dif r \le \int_0^z v^\beta(r,t)n r^{n-1} \dif r \\
&  \le  \int_0^1 v^\beta(r,t)n r^{n-1} \dif r=\frac{1}{|B_1(0)|} \int_{B_1(0)} v^{\beta} (x,t) \dif x.
\end{split}
\Ees
Hence, (iii) is proved.

Now we consider $f(z,t)=z^{n-1}  \p_z v$, it is straightforward to check that
\
\Be \label{e:GMax}
\p_t f=\p_{z}^2 f-\frac{n-1}{z} \p_z f+p K(t) v^{p-1} f \ \ \ \ \ \ \ {\rm in}\  B_1(0) \times (0,T).
\Ee
It is easy to check that $f(1,t)=z^{n-1} \p_z v(z,t) |_{z=1}=0$ and that $f(z,0)<-\gamma \alpha$ for all $\frac 14<z<1$. Applying strong maximum principle to $f$, we get $f(\frac 12,t) \le -C_0$ for all $t \in (0,T)$, this immediate gives (iv).
\end{proof}

Define
$$\hat \xi(t)=e^{\frac {3t} 2-B_t} \xi(t) \ \ \ \ \ \ \ \ t>0,$$
we have the following lemma:

\begin{lem} \label{l:MonHxi}
We have
\begin{equation}
\hat \xi(t) \ge \hat \xi(s) \ \ \ \ \ \ \ \ t \ge s \ge 0.
\end{equation}
\end{lem}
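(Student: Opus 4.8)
The plan is to apply It\^o's formula to $\hat\xi(t)=e^{\frac{3t}{2}-B_t}\xi(t)$, show that its stochastic differential has vanishing martingale part, and that its drift is a nonnegative finite-variation term; monotonicity in $t$ then follows at once by integration.

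First I would isolate the scalar factor $Y_t:=e^{\frac{3t}{2}-B_t}$. Writing $Y_t=e^{g(t,B_t)}$ with $g(t,b)=\tfrac32 t-b$, It\^o's formula gives $\dif Y_t=Y_t\,\dif g+\tfrac12 Y_t\,\dif\langle g\rangle_t=Y_t\big(\tfrac32\,\dif t-\dif B_t\big)+\tfrac12 Y_t\,\dif t=Y_t\big(2\,\dif t-\dif B_t\big)$. Next, regarding $\hat\xi_t=Y_t\xi_t$ as a product of two It\^o processes and using integration by parts, $\dif\hat\xi_t=Y_t\,\dif\xi_t+\xi_t\,\dif Y_t+\dif\langle Y,\xi\rangle_t$, together with the $\xi$-equation from \eqref{e:VEqn}, namely $\dif\xi_t=\big(-\xi_t+e^{-rt}\overline{v^r}\,\xi_t^{-s}\big)\dif t+\xi_t\,\dif B_t$, I would record that the martingale parts of $Y$ and $\xi$ are $-Y_t\,\dif B_t$ and $\xi_t\,\dif B_t$, so $\dif\langle Y,\xi\rangle_t=-Y_t\xi_t\,\dif t$. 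Substituting, the $\dif B_t$ contributions $Y_t\xi_t\,\dif B_t$ (from $Y_t\,\dif\xi_t$) and $-\xi_t Y_t\,\dif B_t$ (from $\xi_t\,\dif Y_t$) cancel exactly, and the drift collapses:
\[
\dif\hat\xi(t)=Y_t\Big[\big(-\xi_t+e^{-rt}\overline{v^r}\,\xi_t^{-s}\big)+2\xi_t-\xi_t\Big]\dif t
= e^{\frac{3t}{2}-B_t}\,e^{-rt}\,\frac{\overline{v^r}(t)}{\xi^s(t)}\,\dif t,
\]
since $-\xi_t+2\xi_t-\xi_t=0$. (This is exactly why the constant $\tfrac32$ in the exponent is the correct one: it is calibrated so that the linear-in-$\xi$ terms produced by $\dif Y_t$ and the cross-variation cancel the damping term $-\xi$ in the $\xi$-dynamics.)

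Finally I would note that the right-hand side is nonnegative: $e^{\frac{3t}{2}-B_t}>0$, $e^{-rt}>0$, $\xi(t)>0$ by construction, and $\overline{v^r}(t)=\frac{1}{|B_1(0)|}\int_{B_1(0)}v^r(x,t)\,\dif x>0$ because $v(t)\ge\gamma>0$ by Lemma \ref{l:ULowBou}(i). Integrating from $s$ to $t$ yields $\hat\xi(t)-\hat\xi(s)=\int_s^t e^{\frac{3\sigma}{2}-B_\sigma}e^{-r\sigma}\,\overline{v^r}(\sigma)\,\xi^{-s}(\sigma)\,\dif\sigma\ge 0$, which is the assertion. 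There is no genuine obstacle; the only points requiring care are the correct sign and size of the cross-variation term $\dif\langle Y,\xi\rangle_t$ and the strict positivity of $v$ (hence of $\overline{v^r}$), both already available from Lemma \ref{l:ULowBou} and the setup.
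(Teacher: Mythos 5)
Your proposal is correct and is essentially the paper's own argument: an It\^o/product-rule computation for $e^{\frac{3t}{2}-B_t}\xi(t)$ in which the $\dif B_t$ terms cancel and the drift reduces to $e^{-rt+\frac{3t}{2}-B_t}\,\overline{v^r}(t)\,\xi^{-s}(t)\,\dif t\ge 0$, whence monotonicity follows by integration. Your extra remarks on the cross-variation sign and the positivity of $\overline{v^r}$ via Lemma \ref{l:ULowBou}(i) only make explicit what the paper states briefly.
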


\begin{proof}
By It$\hat{o}$ formula, we have
\
\begin{equation}  \label{e:HatXiRel}
\begin{split}
\dif \hat \xi(t)&=\dif \left(e^{\frac{3t}2-B_t} \xi(t) \right) \\
&=\xi(t) \dif \left(e^{\frac{3t} 2-B_t}\right)+e^{\frac{3t} 2-B_t} \dif \xi(t)+\left(\dif e^{\frac{3t} 2-B_t}\right)\bigg(\dif \xi(t)\bigg)  \\
&=\xi(t)\left[\frac32 e^{\frac {3t}2-B_t} \dif t-e^{\frac {3t}2-B_t} \dif B_t+\frac12 e^{\frac {3t}2-B_t} \dif t\right] \\
&\ \ \ \ +e^{\frac {3t}2-B_t} \left[-\xi(t) \dif t+e^{-r t}\frac{\overline{v^r}(t)}{\xi^s(t)}\dif t+\xi(t) \dif B_t \right]-e^{\frac {3t}2-B_t} \xi(t) \dif t \\
&=e^{-r t+\frac{3t}2-B_t}\frac{\overline{v^r}(t)}{\xi^s(t)}\dif t.
\end{split}
\end{equation}
Since $\overline{v^r}(t) \ge 0$ and $\xi(t) \ge 0$ for all $t \ge 0$, $\hat \xi(t)$ is an increasing function with
respect to $t$. This completes the proof.
\end{proof}

Since $\hat \xi(0)=\xi_0$, by Lemma \ref{l:MonHxi} we have $\hat \xi(t) \ge \xi_0$ for all $t \ge 0$. For any $\lambda \in (1,\infty)$, define
\
\Be \label{e:TLam}
t_{\lambda}=\inf\{t \ge 0: \hat \xi(t) \ge \lambda \xi_0\}
\Ee
with the convention $\inf \emptyset=\infty$.
($t_\lambda$ is actually a stopping time). It is easy to see that $t_\lambda=\infty$ holds as long as $\hat \xi(t)<\lambda \xi_0$ for all $t>0$.  We clearly have
\
\Be \label{e:HxiBD}
\xi_0 \le \hat \xi(t) \le \lambda \xi_0 \ \ \ \ \ \ \ \ \ \ t \in [0,t_\lambda].
\Ee
In \eqref{e:HxiBD},  we define $\hat \xi(\infty)=\lim_{t \rightarrow \infty} \hat \xi(t)$ as $t_\lambda=\infty$.
\vskip 3mm

Let $\theta: \Omega \rightarrow (0,\infty)$ be a positive random variable. From \eqref{e:Bt*Fin}, we clearly have
\Be
B^*_{\theta(\omega)}(\omega)<\infty \ \ \ \ \ \ \ \ \  \forall \ \omega \in \Omega,
\Ee
for notational simplicity, we shall suppress the variable $\omega$ and write it as $B^*_\theta$.
 Recall the definition of $K(t)$ in \eqref{e:KtDef}, we have
\
{\Be \label{e:KtBou}
(\lambda \xi_0)^{-q} \exp \left(-(p-1) \theta-q B^*_{\theta}\right)\le K(t) \le \xi_0^{-q} \exp \left({3\over 2}q\theta + q B^*_{\theta}\right),    \ t \in [0,\theta].
\Ee}
Indeed, it is easy to see that
\
{$$K(t)=\hat \xi(t)^{-q} \exp \left( -(p-1)t+\frac {3} 2qt- q B_t\right)$$}
holds. By \eqref{e:HxiBD}, we have
\
{
$$
(\lambda \xi_0)^{-q}  \exp \left( -(p-1)t+\frac {3} 2qt- q B_t\right) \le K(t) \le \xi_0^{-q}  \exp \left( -(p-1)t+\frac {3} 2qt- q B_t\right),
$$}
which immediately implies the desired \eqref{e:KtBou}. For the further usage, we denote
\
\Be  \label{e:Tb}
T_b {\rm \ \ the \ \ blowup \ \ time \ \ of \ the \ solution \ \ }v(z,t),
\Ee
\Be \label{e:KThe}
K_\theta=(\lambda \xi_0)^{-q} \exp \left(-(p-1) \theta-q B^*_{\theta}\right).
\Ee
\ \ \  \

\section{Pointwise blow up as $t_\lambda \ge \theta$} \label{s:Easy}

Let $\theta \in (0,\infty)$ be some strictly positive random variable as in the previous section. Recall the definition of $t_\lambda$ in \eqref{e:TLam} with $\lambda\in (1,\infty)$
being some fixed number, under the assumption $t_\lambda\ge \theta$, we shall prove the next two theorems,
whose proofs also partly give the main idea of our approach. The first theorem gives a upper bound of the blow up time pointwise, while
the second claims that the upper bound of the blowup time is larger than $\theta$ as $t_\lambda \ge \theta$,
which means that the blow up could happen after the time $\theta$.

Note that the quantities below such as $\tau$ and $T_b$ are random variables,
we should write them as $\tau(\omega)$ and $T_b(\omega)$ more precisely. For notational simplicity,
we shall suppress the argument $\omega$ in them if no confusions arise.
\
\begin{thm} \label{t:BUpGe1}
Let $\lambda>1$ and let $\theta \in (0,\infty)$ be some random number. If $t_\lambda \ge \theta$, choose $\gamma$ such that
$\gamma^{p-1} K_\theta>\frac{4n}{p-1}$, then
we have
\
\Be
{T_b \le  \frac{2 \delta^2}{\gamma^{p-1}K_\theta(p-1)}\left(1+{\alpha\over 2}\right)^{-p+1}<\frac{ \delta^2}{2n}\left(1+{\alpha\over 2}\right)^{-p+1}.}
\Ee
\end{thm}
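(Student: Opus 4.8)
The plan is to exploit the lower bound $K(t) \geq K_\theta$ on $[0,\theta]$ (valid because $t_\lambda \geq \theta$ forces $\hat\xi(t) \leq \lambda\xi_0$ there, via \eqref{e:KtBou}--\eqref{e:KThe}) to reduce the stochastic problem to a deterministic comparison argument on the PDE for $v$. Since on $[0,\theta]$ the equation $\p_t v = \p_z^2 v + \frac{n-1}{z}\p_z v + K(t)v^p$ has a reaction term dominating $K_\theta v^p$, and since by Lemma~\ref{l:ULowBou}(i) we have $v \geq \gamma$, I would construct an explicit subsolution and track a suitable spatial average (or weighted integral) of $v$ that must blow up in finite time. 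Concretely, I expect the argument to follow the Hu--Yi / Li--Ni scheme: test the equation against the weight $\phi$ from \eqref{e:PPhi>0}, i.e. set $J(t) = \int_{B_1(0)} v(x,t)\,\phi(|x|)\,dx$ (or the isotropic version $\int_0^1 v(z,t)\phi(z)z^{n-1}\,dz$). Using the sign condition \eqref{e:PPhi>0} satisfied by $\phi$ together with the Neumann boundary condition to discard boundary terms after integration by parts, one obtains a differential inequality of Riccati type,
\[
J'(t) \;\geq\; c\,K_\theta\, J(t)^p \;-\; (\text{lower order}),
\]
where the power $p$ on the right comes from Jensen / Hölder applied with the weight $\phi$ and the exponent $\alpha = 2/(p-1)$ chosen precisely so that $\phi^p$ is integrable and the inequality \eqref{e:PPhi>0} closes. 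The condition $\gamma^{p-1}K_\theta > \frac{4n}{p-1}$ is exactly what is needed to absorb the lower-order terms and guarantee $J'(t) \geq \frac{1}{2}c\,K_\theta J(t)^p$ with $c$ explicit, which integrates to blowup of $J$ — hence of $v$ — by a time of order $\big(c K_\theta (p-1) J(0)^{p-1}\big)^{-1}$.

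The bookkeeping then proceeds as follows. First I would fix $\theta$ and work pathwise on $[0,\theta]$, recording that $v_0 = \gamma\phi$ so that $J(0) = \gamma \int \phi\,z^{n-1}\,dz$; the $\delta$-dependence of this integral, combined with the normalization $\phi(z) = z^{-\alpha}$ on $[\delta,1]$, is what produces the factor $\delta^2(1+\alpha/2)^{-p+1}$ in the stated bound. Second, I would carry out the integration by parts: $\int (\p_z^2 v + \frac{n-1}{z}\p_z v)\phi\,z^{n-1}\,dz = \int v\,(\p_z^2\phi + \frac{n-1}{z}\p_z\phi)\,z^{n-1}\,dz + (\text{boundary})$, the boundary term at $z=1$ vanishing by $\p_z v=0$ there and the term at $z=\delta$ being controlled by the $C^1$-matching in the definition of $\phi$. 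Third, I would invoke \eqref{e:PPhi>0} to replace $\p_z^2\phi + \frac{n-1}{z}\p_z\phi$ by $-\alpha n\phi^p$ up to a favorable sign, then apply the weighted Jensen inequality $\int v^p\phi\,z^{n-1}\,dz \geq \big(\int \phi\,z^{n-1}\big)^{1-p}\big(\int v\phi\,z^{n-1}\big)^p$ after rewriting $\phi = \phi^{1/p}\cdot\phi^{(p-1)/p}$ appropriately, yielding the Riccati inequality. Fourth, I would check that the threshold on $\gamma$ makes the linear $-J$ term negligible against the $J^p$ term (using $J(t)\geq J(0)$ and $v\geq\gamma$), and finally integrate $J' \geq \tfrac12 c K_\theta J^p$ to read off $T_b \leq \frac{2\delta^2}{\gamma^{p-1}K_\theta(p-1)}(1+\alpha/2)^{-p+1}$; the strict inequality $< \frac{\delta^2}{2n}(1+\alpha/2)^{-p+1}$ is then immediate from $\gamma^{p-1}K_\theta > 4n/(p-1)$.

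The main obstacle I anticipate is making the weighted integral estimate genuinely produce the exponent $p$ with the sharp constants, rather than a cruder bound: one must handle the inner region $0\leq z<\delta$ (where $\phi$ is a downward parabola, not $z^{-\alpha}$) carefully so that the "defect" in \eqref{e:PPhi>0} there does not spoil the differential inequality, and one must track all constants through the Jensen step to land exactly on $(1+\alpha/2)^{-p+1}$. A secondary subtlety is ensuring the argument is pathwise valid for \emph{every} $\omega$, which is fine because \eqref{e:Bt*Fin} lets us take $B_\theta^*$ finite everywhere, so $K_\theta > 0$ surely and the deterministic comparison runs without exceptional sets. Item (iv) of Lemma~\ref{l:ULowBou} is not needed for this theorem but will matter for the companion lower bound $T_b > \theta$; here only (i)--(iii) enter.
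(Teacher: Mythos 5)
Your reduction to $K(t)\ge K_\theta$ on $[0,\theta]$ is the same first step as the paper, but from there the paper does something different and, importantly, something your Kaplan-type weighted-integral scheme cannot reproduce. The paper compares $v$ with the solution $w$ of the frozen-coefficient problem \eqref{e:WEqnS2} and then applies the Friedman--McLeod device: the auxiliary function $\rho=\partial_t w-\frac{K_\theta}{2}w^p$ satisfies $\partial_t\rho\ge\Delta\rho+pK_\theta w^{p-1}\rho$ with $\partial_z\rho=0$ at $z=1$, and $\rho(\cdot,0)\ge0$ precisely because the hypothesis $\gamma^{p-1}K_\theta>\frac{4n}{p-1}$ means $\frac{K_\theta}{2}\gamma^{p-1}\ge\alpha n$, so \eqref{e:PPhi>0} applies; the maximum principle then gives the \emph{pointwise} ODE inequality $\partial_t w\ge\frac{K_\theta}{2}w^p$, which integrates in $t$ for each fixed $z$ and blows up by time $\tau(z)$, with $\tau(0)=\frac{2\delta^2}{\gamma^{p-1}K_\theta(p-1)}\left(1+\frac{\alpha}{2}\right)^{-p+1}$ coming from the pointwise initial value $v_0(0)=\gamma\delta^{-\alpha}\left(1+\frac\alpha2\right)$ and $\alpha(p-1)=2$. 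Your proposal never reaches a pointwise inequality: it only tracks the spatial average $J(t)=\int v\,\phi\,z^{n-1}\,\dif z$, and this is where two genuine gaps appear.

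First, the absorption step does not close under the stated hypothesis. After integrating by parts you must dominate $\alpha n\int v\,\phi^p z^{n-1}\dif z$ by a fraction of $K_\theta\int v^p\phi\, z^{n-1}\dif z$; using only Lemma \ref{l:ULowBou}(i), i.e. $v\ge\gamma$, this requires $K_\theta\gamma^{p-1}\gtrsim\alpha n\,\sup\phi^{p-1}=\alpha n\,\delta^{-2}\left(1+\frac\alpha2\right)^{p-1}$, which is far stronger than $\gamma^{p-1}K_\theta>\frac{4n}{p-1}$ when $\delta$ is small (one could try to rescue this by proving $v\ge\gamma\phi$ on $[0,\theta]$ via a subsolution argument, but that is an extra step you neither state nor prove). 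Second, and more decisively, even if the Riccati inequality $J'\ge\frac12 K_\theta\bigl(\int\phi z^{n-1}\dif z\bigr)^{1-p}J^p$ were established, integrating it gives a blow-up time of order $\frac{2}{\gamma^{p-1}K_\theta(p-1)}$: the Jensen normalization cancels against $J(0)=\gamma\int\phi z^{n-1}\dif z$, and the factor $\delta^2\left(1+\frac\alpha2\right)^{-p+1}$ does \emph{not} emerge from the $\delta$-dependence of $J(0)$ as you claim (for $n>\alpha$ that integral is essentially $\delta$-independent). That factor is a pointwise feature of $v_0$ at the origin, which averaging washes out, so your route can at best give a strictly weaker upper bound and cannot prove the theorem as stated, including the consequence $T_b<\frac{\delta^2}{2n}\left(1+\frac\alpha2\right)^{-p+1}$. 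The obstacle you flag at the end ("track all constants ... to land exactly on $(1+\alpha/2)^{-p+1}$") is thus not a bookkeeping issue but a structural one; the fix is to switch to the paper's pointwise argument via $\rho=\partial_t w-\frac{K_\theta}{2}w^p$.
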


\begin{proof}
By \eqref{e:KtBou}, we have
\
\Bes 
K(t) \ge K_\theta,  \ \ \ \ \ \ t \in [0,\theta].
\Ees
By \eqref{e:VZEqn} and the above inequality, we have
\
\begin{equation*}
\left\{
\begin{array}{ll}
\p_t v \ge \p^2_z v+\frac{n-1}{z}\p_z v+K_\theta v^p &\mbox{ in } B_1(0) \times (0,\theta),\\[3mm]
\partial_z v=0 & \mbox{ on }\{z=1\} \times (0,\theta), \\[3mm]
v(0)=v_0  & \mbox{ in } B_1(0).
\end{array} \right. \end{equation*}
Now consider another equation
\
\begin{equation} \label{e:WEqnS2}
\left\{
\begin{array}{ll}
\p_t w = \p^2_z w+\frac{n-1}{z}\p_z w+K_\theta w^p   &\mbox{ in } B_1(0) \times (0,\theta),\\[3mm]
{\partial_z w}=0 & \mbox{ on } \{z=1\} \times (0,\theta), \\[3mm]
w(0)=v_0  & \mbox{ in } B_1(0).
\end{array} \right. \end{equation}
By comparison principle, we have
$$v(z,t) \ge w(z,t), \ \ \ \ \ \ (z,t) \in B_1(0) \times [0,\theta].$$
Write $\rho=\p_t w-\frac{K_\theta}2 w^p$, a straightforward calculation gives
\
\Bes 
\begin{split}
\p_t \rho&=\Delta \rho+\frac{K_\theta} 2 p(p-1) w^{p-1} |\nabla w|^2+\frac{K_\theta} 2 p w^{p-1} \Delta w+\frac{K_\theta}2 p w^{p-1} \p_t w \\
& \ge \Delta \rho+\frac{K_\theta} 2 p w^{p-1} \Delta w+\frac{K_\theta}2 p w^{p-1} \p_t w \\
& =\Delta \rho+\frac{K_\theta} 2 p w^{p-1} \left(\p_t w-K_\theta w^p\right)+\frac{K_\theta}2 p w^{p-1} \p_t w \\
&=\Delta \rho+K_\theta p w^{p-1} \rho,
\end{split}
\Ees
where the second '$=$' above is by \eqref{e:WEqnS2}.
It is straightforward to check that for all $z \in B_1(0)$,
\
\Bes
\begin{split}
\rho(z,0)&=\p^2_z u_0+\frac{n-1}{z}\p_z u_0+\frac{K_\theta}2 u^p_0 \\
&=\gamma\left[\p^2_z \phi(z)+\frac{n-1}{z}\p_z \phi(z)+\frac{K_\theta}2 \gamma^{p-1} \phi^p(z)\right].
\end{split}
\Ees
Under the condition in the theorem, \eqref{e:PPhi>0} holds and thus the term
in the square bracket is positive. Therefore,
\
$$\rho(z,0) \ge 0, \ \ \  z \in B_1(0).$$
It is easy to check
\
$${\p_z \rho}=0, \ \ \ \ (z,t) \in \{z=1\} \times [0,\theta].$$
Hence, the maximum principle gives
\
$$\rho(z,t) \ge 0, \ \ \ \ \ (z,t) \in B_1(0) \times [0,\theta].$$
That is
\
\Bes 
\begin{split}
\p_t w-\frac{K_\theta}2 w^p \ge 0,  \ \ \ \ \ \ \ (z,t) \in B_1(0) \times [0,\theta].
\end{split}
\Ees
which implies
\
\Be \label{e:WztBou}
\begin{split}
w(z,t) & \ge \left[\frac{1}{v_0^{-p+1}(z)-\frac{K_\theta (p-1) t}2}\right]^{\frac{1}{p-1}}.
\end{split}
\Ee
By the form of $v_0(z)=\gamma \phi(z)$, for every $z \in (0,1)$ the term on the right hand side \eqref{e:WztBou}
blows up at $t=\tau(z)$ with
$$
\tau(z):=\left\{
\begin{array}{ll}
 \frac{2}{K_\theta(p-1)}\gamma^{-p+1} \left[1+\frac{1-\left(\frac z \delta\right)^2}{2} \alpha\right]^{-p+1} \delta^{2} & z \in [0, \delta],\\[3mm]
\frac{2}{K_\theta(p-1)} \gamma^{-p+1} z^2 & z \in (\delta,1),
\end{array} \right.
$$
where we have used the relation $\alpha(p-1)=2$ (see \eqref{e:AlpDef}).
It is easy to see that $\tau(z)$ is an increasing function and {$\tau(0)= \frac{2 \delta^2}{\gamma^{p-1}K_\theta(p-1)}\left(1+{\alpha\over 2}\right)^{-p+1}$}, thus we get
the desired bound for $T_b$.
\end{proof}

\begin{corollary}
Assume that $\theta \le \theta_0$ a.s. with $\theta_0>0$ being some constant and that $\gamma>0$ is some
(sufficiently large) deterministic number, then we have
\
\
\Be
{\PP \left(T_b \le \frac{ \delta^2}{2n}\left(1+{\alpha\over 2}\right)^{-p+1}\right) \ge 1- \frac{\sqrt{\theta_0}}{\sqrt{2 \pi}} \frac{4}{A_0} \re^{-\frac{A_0^2}{2\theta_0}}}
\Ee
with {$A_0=\frac 1 q \ln \frac{(p-1) \gamma^{p-1}}{4n(\lambda \xi_0)^q}-\frac{p-1}{q}\theta_0$.}
\end{corollary}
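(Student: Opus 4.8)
The idea is to invoke Theorem~\ref{t:BUpGe1} with the specific choice $\theta = t_\lambda \wedge \theta_0$. This choice is admissible: since $\hat\xi$ has continuous paths with $\hat\xi(0)=\xi_0<\lambda\xi_0$, the stopping time $t_\lambda$ in \eqref{e:TLam} is a.s.\ strictly positive, so $\theta$ takes values in $(0,\theta_0]\subset(0,\infty)$; moreover $t_\lambda\ge\theta$ holds identically and $\theta\le\theta_0$ a.s., which are exactly the hypotheses imposed in Theorem~\ref{t:BUpGe1} and in the corollary. With this $\theta$, Theorem~\ref{t:BUpGe1} tells us that whenever the quantitative condition $\gamma^{p-1}K_\theta>\frac{4n}{p-1}$ is satisfied, we have $T_b<\frac{\delta^2}{2n}\left(1+\frac{\alpha}{2}\right)^{-p+1}$.

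It then remains to estimate from below the probability that $\gamma^{p-1}K_\theta>\frac{4n}{p-1}$. Unwinding the definition \eqref{e:KThe} of $K_\theta$, this inequality is equivalent to
\[
(p-1)\theta+qB^*_\theta<\ln\frac{(p-1)\gamma^{p-1}}{4n(\lambda\xi_0)^q}.
\]
Since $0<\theta\le\theta_0$ and $t\mapsto B^*_t$ is nondecreasing, the left-hand side is bounded above by $(p-1)\theta_0+qB^*_{\theta_0}$, so the displayed inequality is implied by $B^*_{\theta_0}<A_0$, with $A_0=\frac1q\ln\frac{(p-1)\gamma^{p-1}}{4n(\lambda\xi_0)^q}-\frac{p-1}{q}\theta_0$ as in the statement. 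Because $\gamma$ is a deterministic number, so is $A_0$, and for $\gamma$ large enough (indeed $A_0\to\infty$ as $\gamma\to\infty$) we have $A_0>0$; precisely, it suffices that $\gamma^{p-1}>\frac{4n(\lambda\xi_0)^q}{p-1}\,\re^{(p-1)\theta_0}$.

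Combining the two steps: for such $\gamma$, on the event $\{B^*_{\theta_0}<A_0\}$ all hypotheses of Theorem~\ref{t:BUpGe1} hold and hence $T_b<\frac{\delta^2}{2n}\left(1+\frac{\alpha}{2}\right)^{-p+1}$, so
\[
\PP\left(T_b\le\frac{\delta^2}{2n}\left(1+\frac{\alpha}{2}\right)^{-p+1}\right)\ge\PP\big(B^*_{\theta_0}<A_0\big)=1-\PP\big(B^*_{\theta_0}\ge A_0\big).
\]
Finally, applying the maximal inequality for Brownian motion recalled in Section~2 with $t=\theta_0$ and $A=A_0>0$ gives $\PP(B^*_{\theta_0}\ge A_0)\le\frac{\sqrt{\theta_0}}{\sqrt{2\pi}}\frac{4}{A_0}\re^{-A_0^2/(2\theta_0)}$, and the claimed bound follows.

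There is no serious difficulty here, since the corollary is essentially a bookkeeping consequence of Theorem~\ref{t:BUpGe1} together with a Gaussian tail estimate. The only points deserving care are: (a) checking that the choice $\theta=t_\lambda\wedge\theta_0$ is legitimate — i.e.\ that $t_\lambda>0$ a.s.\ and that the pathwise comparison argument underlying Theorem~\ref{t:BUpGe1} is valid on the random interval $[0,\theta]$; and (b) making the informal phrase ``$\gamma$ sufficiently large'' quantitative, namely large enough that $A_0>0$ so that the maximal inequality may be applied with a strictly positive threshold.
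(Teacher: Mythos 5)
Your argument is correct and follows essentially the same route as the paper: reduce to bounding $\PP\bigl(\gamma^{p-1}K_\theta>\tfrac{4n}{p-1}\bigr)$ via Theorem \ref{t:BUpGe1}, use $\theta\le\theta_0$ and monotonicity to pass to $B^*_{\theta_0}$, and conclude with the maximal inequality for $B^*_{\theta_0}$ at level $A_0$. Your explicit choice $\theta=t_\lambda\wedge\theta_0$ and the quantitative condition $\gamma^{p-1}>\frac{4n(\lambda\xi_0)^q}{p-1}\,\re^{(p-1)\theta_0}$ ensuring $A_0>0$ are just careful bookkeeping of hypotheses the paper leaves implicit, not a different method.
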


\begin{proof}
By  Theorem \ref{t:BUpGe1}, it suffices to prove that
\
\Be
\PP \left(\gamma^{p-1} K_\theta>\frac{4n}{p-1}\right) \ge 1- \frac{\sqrt{\theta_0}}{\sqrt{2 \pi}} \frac{4}{A_0} \re^{-\frac{A_0^2}{2\theta_0}}.
\Ee
Since $K_\theta$ is an decreasing function of $\theta$ and $\theta \le \theta_0$ a.s., we have
\
\Be
\begin{split}
\PP \left(\gamma^{p-1} K_{\theta}>\frac{4n}{p-1}\right)& \ge \PP \left(\gamma^{p-1} K_{\theta_0}>\frac{4n}{p-1}\right) \\
& ={\PP \left(B^{*}_{\theta_0}<\frac 1 q \ln \frac{(p-1) \gamma^{p-1}}{4n(\lambda \xi_0)^q}-\frac{p-1}{q}\theta_0\right)} \\
& ={1-\PP \left(B^{*}_{\theta_0} \ge \frac 1 q \ln \frac{(p-1) \gamma^{p-1}}{4n(\lambda \xi_0)^q}-\frac{p-1}{q}\theta_0\right)} \\
& \ge 1- \frac{\sqrt{\theta_0}}{\sqrt{2 \pi}} \frac{4}{A_0} \re^{-\frac{A_0^2}{2\theta_0}}
\end{split}
\Ee
with {$A_0=\frac 1 q \ln \frac{(p-1) \gamma^{p-1}}{4n(\lambda \xi_0)^q}-\frac{p-1}{q}\theta_0$.}
\end{proof}

\begin{corollary} \label{c:BlUp}
Assume that the conditions in Theorem \ref{t:BUpGe1} hold.
Let $\gamma \rightarrow \infty$ a.s., then we have
$$T_b \rightarrow 0, \ \ \ \ a.s..$$
\end{corollary}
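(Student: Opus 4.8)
The plan is to read the statement directly off the quantitative upper bound in Theorem~\ref{t:BUpGe1}. Recall that there $T_b=T_b(\gamma)$ is the blowup time of the solution issued from the datum $v_0=\gamma\phi$, and that, under the hypotheses of that theorem (which we assume continue to hold as $\gamma$ varies), one has the pointwise estimate
$$T_b \le \frac{2\delta^2}{\gamma^{p-1} K_\theta\,(p-1)}\left(1+\frac{\alpha}{2}\right)^{-p+1}.$$
The structural point to exploit is that the only $\gamma$-dependent factor on the right-hand side is $\gamma^{-(p-1)}$: the quantity $K_\theta=(\lambda\xi_0)^{-q}\exp\!\big(-(p-1)\theta-qB^*_\theta\big)$ of \eqref{e:KThe} depends on $\xi_0,\lambda,\theta$ and the Brownian path but not on $\gamma$, while $\delta\in(0,1)$, $\alpha=2/(p-1)$ and $p$ are fixed.

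I would argue sample-path-wise. By the convention \eqref{e:Bt*Fin}, $B^*_\theta(\omega)<\infty$ for every $\omega$, hence $K_\theta(\omega)\in(0,\infty)$. Since $p>1$, the hypothesis $\gamma\to\infty$ a.s.\ gives $\gamma^{p-1}\to\infty$ a.s.; in particular, for a.e.\ fixed $\omega$ the admissibility requirement $\gamma^{p-1}K_\theta(\omega)>\tfrac{4n}{p-1}$ of Theorem~\ref{t:BUpGe1} is eventually met, so the displayed estimate applies for all large $\gamma$ and its right-hand side tends to $0$ as $\gamma\to\infty$. Since $T_b\ge 0$, this forces $T_b\to 0$ a.s.

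I do not expect a genuine obstacle here: the corollary is a one-line passage to the limit in Theorem~\ref{t:BUpGe1}. The only point deserving a word of care is the persistence of the remaining hypothesis $t_\lambda\ge\theta$ as $\gamma$ grows; this is part of the standing assumption ``the conditions in Theorem~\ref{t:BUpGe1} hold'', and if one wishes to shrink $\theta$ in order to keep it valid, one notes that $K_\theta\to(\lambda\xi_0)^{-q}>0$ as $\theta\downarrow 0$, so that the admissibility condition $\gamma^{p-1}K_\theta>4n/(p-1)$ still holds for all $\gamma$ sufficiently large.
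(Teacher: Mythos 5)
Your proposal is correct and is essentially the paper's own argument: the paper likewise just invokes the bound of Theorem~\ref{t:BUpGe1}, observes that $K_\theta$ (and hence the whole prefactor) does not depend on $\gamma$ and is a.s.\ finite and positive, and lets $\gamma\to\infty$ to force $T_b\to 0$ a.s. Your extra remark about the admissibility condition $\gamma^{p-1}K_\theta>\tfrac{4n}{p-1}$ being eventually satisfied is a harmless (and slightly more careful) addition, not a different method.
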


\begin{proof}
By Theorem \ref{t:BUpGe1}, we have
\
\
\Bes
{T_b \le  \frac{2 \delta^2}{\gamma^{p-1}K_\theta(p-1)}\left(1+{\alpha\over 2}\right)^{-p+1}.}
\Ees
As $\gamma \rightarrow \infty$ a.s.,
we get {$\frac{2 \delta^2}{\gamma^{p-1}K_\theta(p-1)}\left(1+{\alpha\over 2}\right)^{-p+1}$} a.s. and thus
$T_b \rightarrow 0 \ a.s..$
\end{proof}

\section{General pointwise blow up result}
{Recall that $T_b$ is the blowup time of $v(z,t)$  and the $K_\theta$ is defined in \eqref{e:KThe},
in this section, we shall prove the following blow up theorem:}
\
\begin{thm} \label{t:GBlUp}
Let $\lambda>1$ and let $p \ge r$ and $\frac{p-1}{r}>\frac{2}{n+2}$. We have the following two statements:

(i) In the case $t_\lambda \ge 1$, choose $\gamma>0$ such that $\gamma^{p-1} K_1>\frac{4n}{p-1}$, we have
\
\Be
{T_b \le \frac{2 \delta^2}{\gamma^{p-1}K_1(p-1)}\left(1+{\alpha\over 2}\right)^{-p+1}}
\Ee

(ii) In the case $t_\lambda \le 1$, there exists some $\hat \theta \in (0,1]$ such that as long as $\gamma^{p-1} K_{\hat \theta}>\frac{4n}{p-1}$, we have
\
\Be
{T_b \le \frac{2 \delta^2}{\gamma^{p-1}K_{\hat{\theta}}(p-1)}\left(1+{\alpha\over 2}\right)^{-p+1}.}
\Ee
\end{thm}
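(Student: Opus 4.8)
The plan is to reduce both cases to an application of Theorem \ref{t:BUpGe1}, the only genuinely new work being the treatment of case (ii), where the hypothesis $t_\lambda \ge \theta$ of that theorem fails on the interval $[0,1]$.

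Case (i) is essentially immediate: if $t_\lambda \ge 1$, then I apply Theorem \ref{t:BUpGe1} with the particular choice $\theta = 1$ (a constant, hence a legitimate strictly positive random variable), for which $K_\theta = K_1$. The hypothesis $t_\lambda \ge \theta$ holds by assumption, and the growth condition $\gamma^{p-1}K_1 > \tfrac{4n}{p-1}$ is exactly the one imposed, so the stated bound on $T_b$ drops out directly. Note that the extra hypotheses $p \ge r$ and $\frac{p-1}{r} > \frac{2}{n+2}$ play no role here; they will be needed only to make the integral estimates controlling $\overline{v^r}$ work in case (ii).

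For case (ii), the situation is that $\hat\xi$ has already reached the level $\lambda\xi_0$ by time $1$, so on $[t_\lambda, 1]$ the lower bound $K(t) \ge K_\theta$ used in Theorem \ref{t:BUpGe1} is no longer guaranteed. The idea is to stop earlier: I want to produce a (random) time $\hat\theta \in (0,1]$ with $\hat\theta \le t_\lambda$, so that on $[0,\hat\theta]$ we still have $\xi_0 \le \hat\xi(t) \le \lambda\xi_0$ and hence $K(t) \ge K_{\hat\theta}$ via \eqref{e:KtBou}; once such a $\hat\theta$ is in hand, the argument of Theorem \ref{t:BUpGe1} applies verbatim with $\theta$ replaced by $\hat\theta$, giving the asserted bound. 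The delicate point is that $\hat\theta$ cannot simply be taken to be $t_\lambda$ itself, because the blowup mechanism needs a definite amount of time: we need $\tau(0) = \frac{2\delta^2}{\gamma^{p-1}K_{\hat\theta}(p-1)}(1+\tfrac\alpha2)^{-p+1} \le \hat\theta$ in order to conclude that blowup actually occurs before the interval of validity runs out. So I would argue as follows: using Lemma \ref{l:ULowBou} (parts (i), (iii), (iv)) together with the hypotheses $p \ge r$ and $\frac{p-1}{r} > \frac{2}{n+2}$, I derive a lower bound for $\overline{v^r}(t)$ that forces $\hat\xi$ to grow at a controlled rate through \eqref{e:HatXiRel}, i.e. $\dif\hat\xi(t) = e^{-rt + \frac{3t}{2} - B_t}\,\frac{\overline{v^r}(t)}{\xi^s(t)}\,\dif t$; this yields an explicit lower bound on $t_\lambda$ of the form $t_\lambda \ge \hat\theta$ for a suitable $\hat\theta \in (0,1]$ depending on $\gamma, \xi_0, \lambda$ and $B^*_1$. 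One then checks that for this $\hat\theta$, the condition $\gamma^{p-1}K_{\hat\theta} > \frac{4n}{p-1}$ can indeed be met (for $\gamma$ large enough, as stated), and applies the comparison/concavity argument of Theorem \ref{t:BUpGe1} on $[0,\hat\theta]$.

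The main obstacle is the self-referential nature of the estimate in case (ii): $\hat\theta$ is defined in terms of how fast $\hat\xi$ rises, which depends on $\overline{v^r}$, which in turn depends (through $K(t)$, hence through $\hat\xi$) on the interval on which we are working. I expect the resolution to be a bootstrap: first use the crude bounds $v \ge \gamma$ and $K(t) \ge \xi_0^{-q}e^{-(p-1)t - qB^*_1}$ valid for \emph{all} $t \le 1$ (not just $t \le t_\lambda$) to get a preliminary lower bound on $\overline{v^r}(t)$ on $[0,1]$, feed this into \eqref{e:HatXiRel} to bound $t_\lambda$ from below, and only then identify $\hat\theta$ and verify the growth condition. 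Keeping track of which constants depend on $\gamma$ (and ensuring the final inequality $\tau(0) \le \hat\theta$ survives as $\gamma \to \infty$, so the theorem is non-vacuous) is the part that requires care.
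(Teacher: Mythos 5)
Your case (i) is exactly the paper's argument (take $\theta=1$ in Theorem \ref{t:BUpGe1}), but your plan for case (ii) has a directional error that is fatal as stated. You propose to lower bound $t_\lambda$ by deriving a \emph{lower} bound on $\overline{v^r}(t)$ and feeding it into \eqref{e:HatXiRel}. A lower bound on $\overline{v^r}$ gives a lower bound on $\dif\hat\xi/\dif t$, hence an \emph{upper} bound on the time at which $\hat\xi$ reaches $\lambda\xi_0$ --- the opposite of what you need. To show $t_\lambda\ge\hat\theta>0$ you must bound the forcing in \eqref{e:HatXiRel} from \emph{above}, and this cannot be done with the crude bounds you list: $v\ge\gamma$ gives no upper control on $\overline{v^r}$ (indeed $v$ may already blow up in $L^\infty$ before $t_\lambda$, so $\overline{v^r}$ is not controllable for large $r$), and your claimed bound $K(t)\ge \xi_0^{-q}e^{-(p-1)t-qB^*_1}$ ``for all $t\le1$'' is also backwards: $\hat\xi(t)\ge\xi_0$ yields the \emph{upper} bound on $K$, while the lower bound $K(t)\ge(\lambda\xi_0)^{-q}\exp(-(p-1)t-qB^*_t)$ is only available for $t\le t_\lambda$, which is precisely the quantity you are trying to estimate.

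The missing idea, which is the core of the paper's Section 4, is to change variables from $\hat\xi$ to the functional $h(t)=\overline{v^\beta}(t)$ with $\beta\in(0,1]$ chosen so that $p+\beta-1\ge r$. One first shows (using $\overline{v^{\beta+p-1}}\ge\gamma^{\beta+p-1-r}\,\overline{v^r}$ and the identity $\overline{v^r}=e^{rt-\frac{3t}{2}+B_t}\xi^s\,\dif\hat\xi/\dif t$) that the $\lambda$-fold rise of $\hat\xi$ by time $t_\lambda$ forces $h(t_\lambda)\ge h^*_\lambda$, so that $\hat t_\lambda:=\inf\{t:h(t)\ge h^*_\lambda\}\le t_\lambda$. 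Then one bounds the growth of $h$ from \emph{above}, which is feasible because only the power $\beta\le1$ of $v$ enters: splitting $h=h_1+h_2$, the mass $h_1$ near the origin is controlled by the Friedman--McLeod type estimate $v(z,t)\le C\,z^{-2/(k-1)}$ of Lemma \ref{l:VztIn} (whose integrability against $z^{n-1}\dif z$ requires $n(k-1)>2\beta$ --- this is exactly where $\frac{p-1}{r}>\frac{2}{n+2}$ enters, a role your proposal does not identify), while $\dif h_2/\dif t$ is bounded via the interior gradient bound of Lemma \ref{l:GraV} and $v^\beta\le h/z^n$. This yields the quantitative lower bound $\hat t_\lambda\ge\hat\theta>0$ of Lemma \ref{l:HatT}, hence $t_\lambda\ge\hat\theta$, after which Theorem \ref{t:BUpGe1} applies with $\theta=\hat\theta$ as you intend. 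Without this surrogate functional and the two pointwise/gradient estimates, your bootstrap has no mechanism to produce the needed upper bound on the growth of $\hat\xi$, so the proof of case (ii) does not go through as proposed.
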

\vskip 3mm

By the same argument as showing Corollary \ref{c:BlUp}, we immediately get the following
corollary.
\begin{corollary}
Assume that the conditions in Theorem \ref{t:GBlUp} hold.
Let $\gamma \rightarrow \infty$ a.s., then we have
$$T_b \rightarrow 0, \ \ \ \ a.s..$$
\end{corollary}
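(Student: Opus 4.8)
The plan is to follow the same strategy as in the proof of Corollary \ref{c:BlUp}: both cases of Theorem \ref{t:GBlUp} produce an upper bound on $T_b$ that is inversely proportional to $\gamma^{p-1}$, so letting $\gamma\to\infty$ drives $T_b\to 0$. The only extra care needed here is to treat the dichotomy $\{t_\lambda\ge 1\}$ versus $\{t_\lambda\le 1\}$ and to control the (possibly $\gamma$-dependent) random time $\hat\theta$ that appears in part (ii). I would argue pointwise in $\omega$, using \eqref{e:Bt*Fin} so that $B^*_1(\omega)<\infty$ and hence
$$K_1(\omega)=(\lambda\xi_0)^{-q}\exp\left(-(p-1)-qB^*_1\right)$$
is a finite, strictly positive quantity that does \emph{not} depend on $\gamma$.

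On the event $\{t_\lambda\ge 1\}$, for every $\gamma$ large enough that $\gamma^{p-1}K_1>\frac{4n}{p-1}$ (which holds for all large $\gamma$ since $p>1$ and $K_1>0$), part (i) of Theorem \ref{t:GBlUp} applies and gives
$$T_b\le \frac{2\delta^2}{\gamma^{p-1}K_1(p-1)}\left(1+\frac\alpha2\right)^{-p+1},$$
whose right-hand side tends to $0$ as $\gamma\to\infty$. On the event $\{t_\lambda\le 1\}$, part (ii) furnishes a random time $\hat\theta\in(0,1]$, and here the subtle point is that $\hat\theta$ may itself vary with $\gamma$, because $\xi$, and hence $t_\lambda$ and $\hat\theta$, depend on the initial datum $v_0=\gamma\phi$. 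To bypass this I would invoke the monotonicity already used after Theorem \ref{t:BUpGe1}, namely that $K_\theta$ is a decreasing function of $\theta$; since $\hat\theta\le 1$ this yields the uniform lower bound $K_{\hat\theta}\ge K_1>0$. Consequently $\gamma^{p-1}K_{\hat\theta}\ge \gamma^{p-1}K_1$, so the threshold $\gamma^{p-1}K_{\hat\theta}>\frac{4n}{p-1}$ is already met once $\gamma^{p-1}K_1>\frac{4n}{p-1}$, and part (ii) delivers
$$T_b\le \frac{2\delta^2}{\gamma^{p-1}K_{\hat\theta}(p-1)}\left(1+\frac\alpha2\right)^{-p+1}\le \frac{2\delta^2}{\gamma^{p-1}K_1(p-1)}\left(1+\frac\alpha2\right)^{-p+1}\to 0.$$

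Since the decomposition $\{t_\lambda\ge 1\}\cup\{t_\lambda\le 1\}=\Omega$ is exhaustive and in both cases $T_b(\omega)\to 0$ as $\gamma\to\infty$, the almost sure conclusion follows. The main obstacle I anticipate is exactly the $\gamma$-dependence of $\hat\theta$: a naive use of the bound in (ii) would leave $K_{\hat\theta}$ uncontrolled as $\gamma$ grows, and the argument only closes because the decreasing nature of $\theta\mapsto K_\theta$ pins $K_{\hat\theta}$ from below by the $\gamma$-independent constant $K_1$. Everything else reduces to the same elementary observation as in Corollary \ref{c:BlUp}, that a bound proportional to $\gamma^{-(p-1)}$ vanishes as $\gamma\to\infty$.
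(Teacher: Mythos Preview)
Your proposal is correct and follows essentially the same approach as the paper: the paper simply states that the corollary follows ``by the same argument as showing Corollary \ref{c:BlUp}'', i.e., the bound proportional to $\gamma^{-(p-1)}$ vanishes as $\gamma\to\infty$. You go further than the paper by explicitly addressing the possible $\gamma$-dependence of $\hat\theta$ in case (ii) and resolving it via the monotonicity $K_{\hat\theta}\ge K_1$ (since $\hat\theta\le 1$); the paper leaves this implicit, so your argument is in fact more carefully justified than the original.
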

\vskip 3mm

Let $\beta \in (0,1]$ be some number to be determined later. Denote
$$h(t)=\overline{v^\beta}(t)\ \ \ \ \ \ t>0.$$
For the further usage, we define
\Be
h_1(t)=\frac{1}{|B_1(0)|} \int_{B_R(0)} v^{\beta}(z,t) \dif z,
\Ee
\Be
h_2(t)=\frac{1}{|B_1(0)|} \int_{B_1(0) \setminus B_R(0)} v^{\beta}(z,t) \dif z.
\Ee
where $R \in (0,1)$ is some number to be determined later.
We also define the following stochastic quantity:
\
\Be  \label{e:HLam}
h^*_\lambda=h(0)+\beta (\lambda-1) \lambda^{-q} \gamma^{\beta+p-1-r}\exp\left(-p t_\lambda-(s+q+1) (\frac {3t_\lambda}2+B^*_{t_\lambda})\right) \xi_0^{s-q+1},
\Ee
it will frequently appear in the arguments below. It is easy to see
\
\Be  \label{e:HLamUp}
h^*_\lambda \le h(0)+\beta (\lambda-1) \lambda^{-q} \gamma^{\beta+p-1-r} \xi_0^{s-q+1}.
\Ee

\begin{lem}
Let $\lambda>1$ and $p \ge r$. Assume $t_\lambda<\infty$. Choose $\beta \in (0,1]$ such that $p+\beta-1 \ge r$, then we have
\Bes
\begin{split}
h(t_\lambda)  \ge h^*_\lambda.
\end{split}
\Ees
\end{lem}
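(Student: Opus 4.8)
\emph{Proof sketch.} The plan is to produce a differential inequality for $h$, integrate it over $[0,t_\lambda]$, and recognize the resulting integral as a controlled multiple of the total increment $\hat\xi(t_\lambda)-\hat\xi(0)=(\lambda-1)\xi_0$ of the monotone process.

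First I would differentiate $h(t)=\overline{v^\beta}(t)$ under the integral. Using $\p_t v=\Delta v+K(t)v^p$ (here $v$ is the classical solution on its existence interval, so this is legitimate),
\[
h'(t)=\frac{\beta}{|B_1(0)|}\int_{B_1(0)}v^{\beta-1}\bigl(\Delta v+K(t)v^p\bigr)\dif z .
\]
An integration by parts together with the Neumann condition $\p_\nu v=0$ gives $\int_{B_1(0)}v^{\beta-1}\Delta v\,\dif z=(1-\beta)\int_{B_1(0)}v^{\beta-2}|\nabla v|^2\dif z\ge 0$, since $\beta\le1$ and $v\ge\gamma>0$ by Lemma~\ref{l:ULowBou}(i) (so $v^{\beta-2}$ is harmless). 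Hence $h'(t)\ge\beta K(t)\,\overline{v^{\,p+\beta-1}}(t)$. Because $p+\beta-1\ge r$ and $v\ge\gamma$, I may factor $v^{\,p+\beta-1}=v^{\,p+\beta-1-r}v^r\ge\gamma^{\,p+\beta-1-r}v^r$ pointwise, so that
\[
h'(t)\ \ge\ \beta\,\gamma^{\,p+\beta-1-r}\,K(t)\,\overline{v^r}(t).
\]

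The key point is that $\overline{v^r}(t)$ is exactly what drives $\hat\xi$: by \eqref{e:HatXiRel}, $\dif\hat\xi(t)=e^{-rt+\frac{3t}{2}-B_t}\,\overline{v^r}(t)\,\xi^{-s}(t)\,\dif t$. Using $K(t)=e^{-(p-1)t}\xi^{-q}(t)$ (see \eqref{e:KtDef}) and the substitution $\xi(t)=e^{-\frac{3t}{2}+B_t}\hat\xi(t)$, a short computation gives the pointwise identity
\[
K(t)\,\overline{v^r}(t)\ =\ e^{-(p-1-r)t}\;e^{(1+s-q)(-\frac{3t}{2}+B_t)}\;\hat\xi(t)^{\,s-q}\;\frac{\dif\hat\xi(t)}{\dif t}.
\]
On $[0,t_\lambda]$ I would bound each of the three prefactors below by a quantity depending only on $t_\lambda$ and $B^*_{t_\lambda}$: since $t\le t_\lambda$ and $r\ge0$ one has $e^{-(p-1-r)t}\ge e^{-pt_\lambda}$; since $|-\frac{3t}{2}+B_t|\le\frac{3t_\lambda}{2}+B^*_{t_\lambda}$ and $|1+s-q|\le s+q+1$ (as $s,q\ge0$), the middle factor is $\ge e^{-(s+q+1)(\frac{3t_\lambda}{2}+B^*_{t_\lambda})}$; and by \eqref{e:HxiBD}, $\xi_0\le\hat\xi(t)\le\lambda\xi_0$ forces $\hat\xi(t)^{\,s-q}\ge\lambda^{-q}\xi_0^{\,s-q}$ whichever sign $s-q$ has (using $\lambda>1$). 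Multiplying, and using $\dif\hat\xi/\dif t\ge0$,
\[
K(t)\,\overline{v^r}(t)\ \ge\ \lambda^{-q}\exp\!\Bigl(-pt_\lambda-(s+q+1)\bigl(\tfrac{3t_\lambda}{2}+B^*_{t_\lambda}\bigr)\Bigr)\,\xi_0^{\,s-q}\,\frac{\dif\hat\xi(t)}{\dif t}.
\]

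Finally I would integrate $h'$ over $[0,t_\lambda]$. Since $\hat\xi$ is continuous and nondecreasing (Lemma~\ref{l:MonHxi}) with $\hat\xi(0)=\xi_0$, and $t_\lambda<\infty$, the definition \eqref{e:TLam} gives $\hat\xi(t_\lambda)=\lambda\xi_0$, so $\int_0^{t_\lambda}\frac{\dif\hat\xi}{\dif t}\dif t=(\lambda-1)\xi_0$. Combining the last three displays,
\[
h(t_\lambda)-h(0)\ \ge\ \beta(\lambda-1)\lambda^{-q}\gamma^{\,\beta+p-1-r}\exp\!\Bigl(-pt_\lambda-(s+q+1)\bigl(\tfrac{3t_\lambda}{2}+B^*_{t_\lambda}\bigr)\Bigr)\xi_0^{\,s-q+1},
\]
which is precisely $h(t_\lambda)\ge h^*_\lambda$ by \eqref{e:HLam}. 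The only genuinely delicate step is the exponent bookkeeping in the displayed identity for $K(t)\overline{v^r}(t)$ and the verification that each (random but pathwise finite) prefactor is bounded below on the \emph{entire} interval $[0,t_\lambda]$ by the stated deterministic-looking expression; this is exactly where the hypotheses $p\ge r$, $\lambda>1$, $s,q\ge0$ (and $\beta\le1$, $p+\beta-1\ge r$) enter. If one is uneasy about the solution's regularity up to time $t_\lambda$, one runs the argument on $[0,\min(t_\lambda,T)]$ for $T$ below the blowup time and then lets $T$ increase.
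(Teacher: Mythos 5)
Your argument is correct and is essentially the paper's own proof: the same differential inequality $h'\ge\beta K\,\overline{v^{\beta+p-1}}\ge\beta\gamma^{\beta+p-1-r}K\,\overline{v^r}$, the same identification of $K(t)\overline{v^r}(t)$ with $\hat\xi^{\,s-q}(t)$ times exponential prefactors via \eqref{e:HatXiRel} and \eqref{e:KtDef}, the same pathwise lower bounds on $[0,t_\lambda]$ using \eqref{e:HxiBD} and $B^*_{t_\lambda}$, and the same integration over $[0,t_\lambda]$ using the monotonicity of $\hat\xi$ and the definition \eqref{e:TLam}. The exponent bookkeeping in your displayed identity matches the paper's computation exactly, so nothing further is needed.
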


\begin{proof}
We have
\
\Be
\begin{split}
\frac{\dif h(t)}{\dif t}&=\frac{\beta}{|B_1(0)|} \int_{B_1(0)}  v^{\beta-1}(x,t) \p_t v(x,t) \dif x  \\
&=\frac{\beta}{|B_1(0)|} \int_{B_1(0)}  v^{\beta-1}(x,t) \left[\Delta v(x,t)+ K(t) v^p(x,t)\right] \dif x  \\
&=\frac{\beta(1-\beta)}{|B_1(0)|} \int_{B_1(0)}  v^{\beta-2}(x,t) |\nabla v(x,t)|^2 \dif x
+\beta K(t) \overline{v^{\beta+p-1}}(t)  \\
&\ge \beta K(t) \overline{v^{\beta+p-1}}(t) \\
\end{split}
\Ee
where the last inequality is by $\beta \in (0,1]$. Since $p\ge r$ and $\beta \in (0,1]$ are such that
$p+\beta-1 \ge r$, by Lemma \ref{l:ULowBou} (i), we have $v(z,t) \ge \gamma$ for all $t>0$ and $0<z<1$ and thus
\
$$
\overline{v^{\beta+p-1}}(t) \ge \gamma^{\beta+p-1-r}\overline{v^{r}}(t).
$$
Hence,
\
\Bes
\frac{\dif h(t)}{\dif t} \ge \beta \gamma^{\beta+p-1-r} K(t) \overline{v^{r}}(t)
\Ees
\noindent
On the other hand, by \eqref{e:HatXiRel}, we have
\
\Bes
\overline{v^r}(t)=e^{r t-\frac{3t}2+B_t} \xi^s(t) \frac{\dif \hat \xi(t)}{\dif t}.
\Ees
Hence, by \eqref{e:KtDef}, \eqref{e:HxiBD} and the above relations, we have
\
\Bes
\begin{split}
\frac{\dif h(t)}{\dif t} & \ge  \beta \gamma^{\beta+p-1-r}K(t)  e^{r t-\frac {3t}2+B_t} \xi^s(t) \frac{\dif \hat \xi(t)}{\dif t} \\
&= \beta \gamma^{\beta+p-1-r} \frac{{\hat \xi}^{s}(t)}{{\hat \xi}^{q}(t)}  e^{(r-p+1) t} e^{(s-q+1)(-\frac{3t}2+B_t)}  \frac{\dif \hat \xi(t)}{\dif t} \\
& \ge \beta \gamma^{\beta+p-1-r} \lambda^{-q} \xi_0^{s-q}  e^{-pt} e^{-(s+q+1) (\frac {3t}2+B^*_t)}  \frac{\dif \hat \xi(t)}{\dif t}\\
& \ge \beta \gamma^{\beta+p-1-r} \lambda^{-q} \xi_0^{s-q}  \exp\left(-p t_\lambda-(s+q+1) (\frac {3t_\lambda}2+B^*_{t_\lambda})\right) \frac{\dif \hat \xi(t)}{\dif t}
\end{split}
\Ees
for all $t \in [0,t_\lambda]$.
By the definition of $t_\lambda$ and Lemma \ref{l:MonHxi}, we immediately get the desired inequality.
\end{proof}

Stimulated from the previous lemma, we define
\Be  \label{e:T2Def}
\begin{split}
\hat t_\lambda=\inf\{t \ge 0: h(t)  \ge h^*_\lambda\},
\end{split}
\Ee
it is clear that
$$\hat t_\lambda \le t_\lambda$$
and
\
\Be   \label{e:t2UpBou}
\begin{split}
h(t)  \le h^*_\lambda,  \ \ \  \ \ \ \ \ \ \ \  t \in [0,\hat t_\lambda].
\end{split}
\Ee

\noindent Denote $f(z,t)=z^{n-1} \p_z v(z,t)$, it is easy to check
\
$$
\mcl L f=\p_t f-\p^2_z f+\frac{n-1}{z} \p_z f-p K(t)  v^{p-1} f=0.
$$
The proof of the next lemma has some similarity to that of \cite[Lemma 2.2]{FrMc85}.
\
\begin{lem}  \label{l:VztIn}
Let $\lambda>1$. Let $k \in (1,p)$, $\beta \in (0,1]$ and $\ell \ge \frac k{\beta}$. Assume $t_\lambda<\infty$. As $\e \le \e^*$ with
\Be   \label{e:EpsSel}
{\e^*=\min\left\{\alpha (1+\frac \alpha 2)^{-k} h^\ell(0), \ \ 2^{-\ell n+n} C_0 \gamma^{\beta \ell-k}, \ \frac{(p-k)\gamma^{p+\ell-k}}{2k (\lambda \xi_0)^q} \exp\left(-(p-1)t_{\lambda}-q B^{*}_{t_\lambda}\right)\right\},}
\Ee
we have
\Be
v(z,t) \le \left(\frac{2 h^{\ell}(t)}{\e(k-1)}\right)^{\frac{1}{k-1}} z^{-\frac{2}{k-1}}, \ \
\ \ \ \ \ \ \forall \ t \in [0,\hat t_\lambda] \ \ \forall \ z \in (0,\frac 12].
\Ee
\end{lem}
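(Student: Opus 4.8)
The plan is to construct a supersolution of the form $V(z,t) = c(t) z^{-2/(k-1)}$ for an appropriate time-dependent coefficient $c(t)$, and compare it with $v$ on the region $(0,\tfrac12] \times [0,\hat t_\lambda]$ via the maximum principle. The natural candidate is $c(t) = \bigl(\tfrac{2 h^\ell(t)}{\e(k-1)}\bigr)^{1/(k-1)}$, so that $V(z,t)$ is exactly the claimed bound. First I would record the elementary identity
\[
\p_z^2\bigl(z^{-\frac{2}{k-1}}\bigr) + \frac{n-1}{z}\p_z\bigl(z^{-\frac{2}{k-1}}\bigr) = \frac{2}{k-1}\Bigl(\frac{2}{k-1}+1 - (n-1)\Bigr) z^{-\frac{2}{k-1}-2},
\]
and then compute $\mathcal{L}V := \p_t V - \p_z^2 V - \frac{n-1}{z}\p_z V - K(t) V^p$, using that $h$ is monotone (its derivative is nonnegative, as in the previous lemma) so the $\dot c(t)$ contribution has a favorable sign, and that $V^p = c(t)^p z^{-2p/(k-1)}$ with $2p/(k-1) > 2/(k-1)+2$ since $k<p$. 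The goal is to show $\mathcal{L}V \ge 0$ on the relevant region; the powers of $z$ don't match, so I would bound $z^{-2p/(k-1)} \le (\tfrac12)^{\cdots} z^{-2/(k-1)-2}$ away from $z=0$ — but since we work on $z\le\tfrac12$ the relevant estimate runs the other way, so instead I would use $v\ge\gamma$ (Lemma \ref{l:ULowBou}(i)) and the upper bound on $K(t)$ from \eqref{e:KtBou}/\eqref{e:KThe} to absorb the $K(t)V^p$ term: this is exactly where the third entry in the definition of $\e^*$ in \eqref{e:EpsSel} is used, as it forces $c(t)$ small enough (equivalently $\e$ small enough relative to $h^\ell$) that the reaction term is dominated by the Laplacian term.

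Next I would check the boundary and initial conditions for the comparison. At $t=0$: on $(0,\delta]$ we have $v_0 = \gamma\phi = \gamma z^{-\alpha}$ near the relevant scale and $\overline{v_0^\beta} = h(0)$, so the inequality $v_0(z) \le \bigl(\tfrac{2h^\ell(0)}{\e(k-1)}\bigr)^{1/(k-1)} z^{-2/(k-1)}$ at $z\le\tfrac12$ follows provided $\e \le \alpha(1+\tfrac\alpha2)^{-k} h^\ell(0)$ — the first entry of $\e^*$ — after matching exponents using $\alpha(k-1) = 2\cdot\frac{k-1}{p-1}$... actually I should be careful: $\alpha = \frac{2}{p-1}$ and we need $\alpha \le \frac{2}{k-1}$, which holds since $k<p$, so $z^{-\alpha} \le z^{-2/(k-1)}$ on $z\le 1$, reducing the initial check to comparing constants. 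At the inner boundary $z=\tfrac12$: here I would use Lemma \ref{l:ULowBou}(iv), namely $\p_z v(\tfrac12,t) \le -C_0 2^{n-1}$, together with a bound on $v(\tfrac12,t)$ from above via $v^\beta(\tfrac12,t)(\tfrac12)^n \le \overline{v^\beta}(t) = h(t) \le h^*_\lambda$ (Lemma \ref{l:ULowBou}(iii) and \eqref{e:t2UpBou}) — wait, more directly, the second entry $2^{-\ell n + n} C_0 \gamma^{\beta\ell - k}$ of $\e^*$ is tuned so that the supersolution $V$ and its derivative dominate $v$ at $z=\tfrac12$, using $v(\tfrac12,t)\ge\gamma$ and the flux bound. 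I would set up the comparison on $[\eta,\tfrac12]\times[0,\hat t_\lambda]$ for $\eta\downarrow 0$ to avoid the singularity at $z=0$, noting $V(z,t)\to\infty$ as $z\to 0$ so the inner boundary $z=\eta$ causes no trouble, then let $\eta\to 0$.

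The main obstacle I anticipate is getting all three constraints in \eqref{e:EpsSel} to simultaneously do their jobs with the exponents of $\gamma$, $\delta$, $h$, and the various powers $k,\beta,\ell$ correctly balanced — in particular the interplay between $\ell \ge k/\beta$ (which ensures $h^\ell$ is small enough relative to $h$ appearing in $\overline{v^\beta}$-type estimates, and controls $\gamma$-powers since $h(0)\ge\gamma^\beta$ up to constants) and the requirement $k>1$ (needed for the exponent $\frac{1}{k-1}$ and $\frac{2}{k-1}$ to make sense and for the supersolution computation to have the right sign). The reaction-term estimate is the delicate one: because we are on $z\le\tfrac12$, the power $z^{-2p/(k-1)-?}$ from $K V^p$ is the \emph{most} singular term, so it cannot be absorbed by the diffusion term pointwise; it must instead be controlled by the smallness of the coefficient $c(t)^{p-1}$, which via $c(t) \sim (h^\ell/\e)^{1/(k-1)}$ and the upper bound $h\le h^*_\lambda$ on $[0,\hat t_\lambda]$ translates precisely into the third condition on $\e^*$. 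I would carry out that absorption carefully, track the constants, and conclude by the maximum principle.
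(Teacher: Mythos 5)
There is a genuine gap at the heart of your plan: the function $V(z,t)=c(t)z^{-2/(k-1)}$ with $c(t)=\bigl(\tfrac{2h^{\ell}(t)}{\varepsilon(k-1)}\bigr)^{1/(k-1)}$ is \emph{not} a supersolution of $\p_t v=\p_z^2v+\tfrac{n-1}{z}\p_z v+K(t)v^p$ on any neighbourhood of $z=0$, and no choice of $\varepsilon$ in \eqref{e:EpsSel} can repair this. Writing $m=\tfrac{2}{k-1}$, one has $\p_z^2V+\tfrac{n-1}{z}\p_zV=c\,m(m+2-n)z^{-m-2}$ while $K(t)V^p=K(t)c^pz^{-mp}$, and $mp-(m+2)=\tfrac{2(p-k)}{k-1}>0$ because $k<p$. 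So the reaction term evaluated at $V$ is strictly more singular than both the diffusion term and $\p_tV=\dot c\,z^{-m}$, and it enters $\p_tV-\p_z^2V-\tfrac{n-1}{z}\p_zV-K(t)V^p$ with a negative sign; since $K(t)\ge K_\theta>0$ on the relevant time interval, this expression tends to $-\infty$ as $z\to0$ for every fixed $c(t)>0$. Your own remedy --- ``control it by the smallness of $c(t)^{p-1}$'' --- cannot work pointwise: no constant beats a worse power of $z$ as $z\to0$, and in fact the third entry of \eqref{e:EpsSel} makes $\varepsilon$ \emph{small}, hence $c(t)\sim(h^\ell/\varepsilon)^{1/(k-1)}$ \emph{large}, which goes in the wrong direction. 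This is exactly the reason the exponent $\tfrac{2}{k-1}$, being larger than the critical $\alpha=\tfrac{2}{p-1}$, cannot be realized by a pure power-in-$z$ barrier.

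The paper sidesteps this by proving a gradient estimate in the style of Friedman--McLeod rather than a direct comparison: it sets $\eta(z,t)=z^{n-1}\p_z v+\varepsilon z^n\tfrac{v^k}{h^{\ell}(t)}$ and applies the operator $\mcl L w=\p_t w-\p_z^2w+\tfrac{n-1}{z}\p_z w-pK(t)v^{p-1}w$ (which annihilates $z^{n-1}\p_z v$). The third entry of $\varepsilon^*$ is used there to guarantee $2\varepsilon k v^{k-1}\le (p-k)e^{-(p-1)t}h^{\ell}v^{p-1}/\xi^{q}$ on $[0,t_\lambda]$ (via $v\ge\gamma$ and $\xi_0\le\hat\xi\le\lambda\xi_0$), giving $\mcl L\eta\le-2\varepsilon k\tfrac{v^{k-1}}{h^{\ell}}\eta$; the first two entries give $\eta(z,0)\le0$ and $\eta(\tfrac12,t)\le0$ (this is where Lemma \ref{l:ULowBou}(iv) and the explicit form of $\phi$ enter, not a bound on $v(\tfrac12,t)$ itself). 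The maximum principle then yields $\eta\le0$, i.e.\ $\p_z v\le-\varepsilon z\,v^{k}/h^{\ell}$, and integrating this ODE inequality in $z$ from $0$ to $z$ produces $v^{1-k}(z,t)\ge\tfrac{\varepsilon(k-1)z^2}{2h^{\ell}(t)}$, which is the claimed bound. So the stated power law comes out of integrating a differential inequality for $\p_z v$, not from a barrier; your proposal would need to be restructured along these lines to close.
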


\begin{proof}
Denote $\eta(z,t)=f(z,t)+\e z^n \frac{v^k(z,t)}{h^{\ell}(t)}$ with $f(z,t)=z^{n-1} \p_z v(z,t)$ and $\e>0$ some number to be
determined later and $\ell \ge \frac{k}{\beta}$,
we prove the lemma in the following three steps.

\emph{Step 1: Property of $\eta(z,t)$}
 By (i), (iii) and (iv) of Lemma \ref{l:ULowBou} and the relation $\ell  \ge \frac k\beta$,
we further have
\
\Be \label{e:1/2Bou}
\begin{split}
\eta(\frac 12,t)&=(\frac 12)^{n-1} \p_z v(\frac 12,t)+\e (\frac 12)^n \frac{v^k(\frac 12,t)}{h^\ell(t)}   \\
& \le -C_0+\e (\frac 12)^n \left(\frac{v^\beta(\frac 12,t)}{h(t)}\right)^{\ell}  v^{k-\beta \ell} (\frac 12,t)  \\
& \le -C_0+\e (\frac 12)^n \left(\frac{v^\beta(\frac 12,t)}{h(t)}\right)^{\ell}  \gamma^{k-\beta \ell}   \\
& \le -C_0+\e 2^{n\ell-n} \gamma^{k-\beta \ell} \\
\end{split}
\Ee

\noindent As $t=0$, for all $z \in (0,\delta)$, by the relation $\alpha+2=p \alpha>k \alpha$, we have
\
\Be   \label{e:IniCon1}
\begin{split}
\eta(z,0) & \le \left[-\alpha \delta^{-\alpha-2}+\e (1+\frac \alpha 2)^k \frac{1}{h^\ell(0)} \delta^{-\alpha k}\right] z^n \\
& = \left[-\alpha+\e (1+\frac \alpha 2)^k \frac{1}{h^\ell(0)} \delta^{\alpha+2-\alpha k}\right] \delta^{-\alpha-2} z^n \\
& \le \left[-\alpha+\e (1+\frac \alpha 2)^k \frac{1}{h^\ell(0)} \right] \delta^{-\alpha-2} z^n.
\end{split}
\Ee

\noindent
For all $z \in (\delta,1)$, by the relation $\alpha+2=p \alpha>k \alpha$ again, we have
\
\Be  \label{e:IniCon2}
\begin{split}
\eta(z,0) & \le  \left[-\alpha+\frac{\e}{h^\ell(0)} z^{\alpha+2-\alpha k}\right] z^{n-\alpha-2} \\
& \le  \left[-\alpha+\frac{\e}{h^\ell(0)}\right] z^{n-\alpha-2}.
\end{split}
\Ee

\noindent Hence, collecting \eqref{e:1/2Bou}-\eqref{e:IniCon2}, as long as
\Be
\e \le \min\left\{\alpha (1+\frac \alpha 2)^{-k} h^\ell(0), \ \ 2^{-\ell n+n} C_0 \gamma^{\beta \ell-k}\right\},
\Ee
we have
\
\Be \label{e:Ini<0}
\eta(z,0) \le 0, \ \ \ \  \ \ \ \ \ z \in (0,1),
\Ee
\Be   \label{e:1/2<0}
\eta(\frac 12,t) \le 0, \ \ \ \  \ \ \ \ \ t \in (0,\hat t_\lambda).
\Ee

\emph{Step 2:}  Observe
\
\Bes
\begin{split}
\mcl L \eta&=\mcl L \left(\e z^n \frac{v^k}{h^{\ell}}\right) \\
&=-2 \e k z^{n-1} \frac{v^{k-1}}{h^\ell} \p_z v-\e(p-k) {e^{-(p-1)t}} \frac{z^n}{\xi^q} \frac{v^{p-1+k}}{h^\ell}
-\e k(k-1)z^n \frac{v^{k-2}}{h^{\ell}} (\p_z v)^2 \\
&\ \ \ -\e \beta(1-\beta) \ell z^n \frac{v^k}{h^{\ell+1}} \overline{v^{\beta-2} |\nabla v|^2}
-\e \ell \beta {e^{-(p-1)t}}\frac{z^n}{\xi^q}  \frac{v^k}{h^{\ell+1}} \overline{v^{\beta+p-1}} \\
& \le -2 \e k z^{n-1} \frac{v^{k-1}}{h^\ell} \p_z v-\e(p-k){e^{-(p-1)t}} \frac{z^n}{\xi^q} \frac{v^{p-1+k}}{h^\ell} \\
&=-2 \e k  \frac{v^{k-1}}{h^\ell} \eta+\frac{\e z^n v^k}{h^{2\ell}} \left[2 \e k v^{k-1}-(p-k){e^{-(p-1)t}}
\frac{h^{\ell} v^{p-1}}{\xi^q}\right].
\end{split}
\Ees
Recall that $v(t) \ge \gamma$ for all $t \ge 0$ from Lemma \ref{l:ULowBou} and that
$\xi_0 \le \hat \xi(t) \le \lambda \xi_0$
for all $t \in [0,t_\lambda]$ where $\hat \xi(t)=e^{\frac {3t}2-B_t} \xi(t)$, we have

\Bes
\begin{split}
{e^{-(p-1)t}}\frac{h^{\ell}(t) v^{p-k}(t)}{\xi^q(t)}&={e^{-(p-1)t}}\frac{h^{\ell}(t) v^{p-k}(t)}{{\hat \xi}^q(t) e^{-\frac q2 t+q B_t}}
\ge {e^{-(p-1)t}}\frac{\gamma^{p+\ell-k}}{{\hat \xi}^q(t) e^{-\frac q2 t+q B_t}} \\
& \ge {e^{-(p-1)t}}\frac{\gamma^{p+\ell-k}}{(\lambda \xi_0)^q e^{-\frac q2 t+q B_t}} \ge  \frac{e^{{-(p-1)t_{\lambda}}-q B^*_{t_\lambda}}\gamma^{p+\ell-k}}{(\lambda \xi_0)^q}, \ \ \ \  \ \ \ \ \ \ \ \  \ \ \ \ t \in [0,t_\lambda].
\end{split}
\Ees
Hence, as long as
\Be \label{e:ELes}
\e \le \frac{(p-k)e^{{-(p-1)t_{\lambda}}-q B^*_{t_\lambda}}\gamma^{p+\ell-k}}{2k (\lambda \xi_0)^q},
\Ee
we have
\Be   \label{e:LLes}
\mcl L \eta \le -2 \e k  \frac{v^{k-1}}{h^\ell} \eta.
\Ee

\emph{Step 3:}  Choose
$$\e \le \min\left\{\alpha (1+\frac \alpha 2)^{-k} h^\ell(0), \ \ 2^{-\ell n+n} C_0 \gamma^{\beta \ell-k}, \ \frac{(p-k)\gamma^{p+\ell-k}}{2k (\lambda \xi_0)^q} \exp\left({-(p-1)t_{\lambda}}-q B^{*}_{t_\lambda}\right)\right\} ,$$
then \eqref{e:LLes}, \eqref{e:Ini<0} and \eqref{e:1/2<0} all hold. By comparison principle, we get
$$\eta(z,t) \le 0,  \ \ \ \ \ \ \ \ \forall \ 0<z<1 \ \ \ \forall \ 0 \le t \le \hat t_\lambda.$$
This implies
\ \ \ \ \ \ \ \ \ \
\Bes
\p_z v(z,t) \le -\e  z \frac{v^{k} (z,t)}{h^\ell(t)},
\Ees
thus
\ \ \ \ \ \
\Be
v(z,t) \le \left(\frac{2 h^{\ell}(t)}{\e(k-1)}\right)^{\frac{1}{k-1}} z^{-\frac{2}{k-1}}, \ \
\ \ \ \ \ \ \forall \ t \in [0,\hat t_\lambda] \ \ \forall \ z \in (0,\frac 12].
\Ee
\end{proof}

\begin{lem} \label{l:GraV}
Assume $t_\lambda \le 1$. For $R \in (0,1)$, we have
\Be
 |\p_z v(z,t)| \le C_1, \ \ \ \ \ \ \ \forall \ z \in [R,1] \ \ \ \forall \ t \in [0,\hat t_\lambda],
\Ee
where $C_1$ is some number depending on $R$ and $\gamma$.
\end{lem}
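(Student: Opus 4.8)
The plan is to obtain an interior (in $z$) gradient bound for $v$ away from the origin, on the time interval $[0,\hat t_\lambda]$, by combining the $L^1$-type control on $h(t)=\overline{v^\beta}(t)$ coming from \eqref{e:t2UpBou} with the pointwise decay estimate of Lemma \ref{l:VztIn}. First I would record that on $[0,\hat t_\lambda]$ we have $h(t)\le h^*_\lambda$, and that $h^*_\lambda$ is bounded above by the deterministic-looking quantity in \eqref{e:HLamUp} (using $t_\lambda\le 1$ so that $B^*_{t_\lambda}\le B^*_1<\infty$ and the exponential factors are bounded). Feeding this into Lemma \ref{l:VztIn} gives, for a fixed admissible choice of $k\in(1,p)$, $\beta\in(0,1]$ with $p+\beta-1\ge r$, and $\ell\ge k/\beta$, and for $\e\le\e^*$ (which is strictly positive once $t_\lambda\le 1$), a bound of the form $v(z,t)\le C(R)$ for all $z\in[R,1/2]$ and $t\in[0,\hat t_\lambda]$; combined with the monotonicity $\p_z v\le 0$ from Lemma \ref{l:ULowBou}(ii) and the lower bound $v\ge\gamma$, this shows $v$ is bounded above and below, uniformly, on $[R,1]\times[0,\hat t_\lambda]$.

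Next I would turn the uniform $L^\infty$ bound on $v$ into a gradient bound. Recall $f(z,t)=z^{n-1}\p_z v$ satisfies the linear parabolic equation $\mcl L f=\p_t f-\p_z^2 f+\frac{n-1}{z}\p_z f-pK(t)v^{p-1}f=0$, with bounded coefficients on $[R,1]\times[0,\hat t_\lambda]$: indeed $K(t)$ is bounded there by \eqref{e:KtBou} (again using $t_\lambda\le1$ and finiteness of $B^*_1$), and $v^{p-1}$ is bounded by the previous step. I would apply interior parabolic estimates (Schauder, or the De Giorgi--Nash--Moser gradient estimate for uniformly parabolic equations, as in \cite[Lemma 2.2]{FrMc85}) to the equation satisfied by $v$ itself, $\p_t v=\p_z^2 v+\frac{n-1}{z}\p_z v+K(t)v^p$, which on the region $z\in[R,1]$ has smooth, bounded, non-degenerate coefficients and bounded right-hand side; this yields $|\p_z v|\le C_1$ on $[R/2,1]$ (hence on $[R,1]$), with $C_1$ depending only on $R$, $\gamma$, $\lambda$, $\xi_0$, $n$, $p$, $q$, $B^*_1$ and the initial data $v_0$. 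At $z=1$ the Neumann condition $\p_z v=0$ is available, so there is no boundary obstruction there; near $z=R$ one uses the interior estimate on the slightly larger annulus $[R/2,1]$ where the same uniform bounds hold.

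The main obstacle is the bookkeeping of the constants: one must check that the threshold $\e^*$ in \eqref{e:EpsSel} is genuinely positive and that the resulting constant in Lemma \ref{l:VztIn} depends on $t_\lambda$ only through $B^*_{t_\lambda}\le B^*_1$, so that under the standing assumption $t_\lambda\le1$ everything is controlled by a finite (random but $\omega$-wise finite) constant; and one must verify that the parabolic regularity estimate can be localized away from the singular point $z=0$ so that the factor $\frac{n-1}{z}$ in the operator causes no trouble on $[R,1]$. Neither step is deep, but getting a clean statement of which quantities $C_1$ depends on requires care. I would present the argument in two short steps — (1) uniform two-sided bounds on $v$ on $[R,1]\times[0,\hat t_\lambda]$ via Lemma \ref{l:VztIn}, Lemma \ref{l:ULowBou}(i)(ii) and \eqref{e:t2UpBou}; (2) interior parabolic gradient estimate for the $v$-equation with these bounds — and absorb the constant-tracking into the phrase ``$C_1$ depends on $R$ and $\gamma$'' as in the statement.
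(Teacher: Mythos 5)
Your proposal is correct in substance and follows the same two-step skeleton as the paper (a uniform bound on $v$ on $[R,1]\times[0,\hat t_\lambda]$, then parabolic regularity), but the way you obtain the sup bound on $v$ is a genuine detour. The paper gets it in one line: by Lemma \ref{l:ULowBou}(iii), $z^n v^{\beta}(z,t)\le \overline{v^{\beta}}(t)=h(t)$, and by \eqref{e:t2UpBou} and \eqref{e:HLamUp} this is at most $h^*_\lambda$ on $[0,\hat t_\lambda]$, so $v(z,t)\le \bigl(R^{-n}h^*_\lambda\bigr)^{1/\beta}$ on $[R,1]$; no use of Lemma \ref{l:VztIn} is needed. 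Your route through Lemma \ref{l:VztIn} (with the choice of $k,\ell$ and the positivity of $\e^*$) plus the monotonicity $\p_z v\le 0$ does deliver the same $L^\infty$ bound, but it imports the whole maximum-principle machinery of that lemma where the elementary monotonicity estimate already suffices, and it makes the constant's dependence murkier. For the second step the paper works with the linear equation \eqref{e:WEqn} for $w=\p_z v$, whose zeroth-order coefficient $pK(t)v^{p-1}-\tfrac{n-1}{z^2}$ is bounded on $[R,1]$ by the sup bound on $v$ and by \eqref{e:KtBou} with $t_\lambda\le 1$, together with the explicit bound $|\p_z v_0|\le\alpha\gamma R^{-\alpha-1}$ and an extension of the coefficient in time to $[0,1]$ before citing parabolic regularity; you instead apply interior estimates to the $v$-equation with bounded right-hand side, which is equally legitimate, and your explicit localization to $[R/2,1]$ with the Neumann condition at $z=1$ is in fact slightly more careful than the paper's treatment of the artificial boundary at $z=R$. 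One small caution: De Giorgi--Nash--Moser alone gives only H\"older continuity of $v$, not a gradient bound, and since $K(t)$ involves $B_t$ it is not H\"older in time with useful control, so you should invoke interior $W^{2,1}_p$ (or Schauder-in-space) estimates that require only boundedness of the right-hand side. Finally, as you note, the constant $C_1$ in either argument really depends also on the random quantities $h^*_\lambda$, $B^*_1$, $\xi_0$, $\lambda$ (and on $n,p,q$), not merely on $R$ and $\gamma$; this looseness is present in the paper's statement as well.
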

\begin{rem}
In the lemma, we assume $t_\lambda \le 1$, the $1$ here can be replaced by any other positive number.
It seems that the assumption $t_\lambda \le 1$ is necessary for getting the bound $C_1$ which only depends on $R$.
\end{rem}

\begin{proof}
Writing $w(z,t)=\p_z v(z,t)$, by Eq. \eqref{e:VZEqn} we have
\
\Be \label{e:WEqn}
\p_t w=\p^2_z w+\frac{n-1}{z} \p_z w+\left(p K(t) v^{p-1}-\frac{n-1}{z^2}\right) w.
\Ee
By (iii) of Lemma \ref{l:ULowBou} and \eqref{e:t2UpBou}, we have
\
\Bes
v^{\beta}(z,t) \le z^{-n} h(t) \le z^{-n} h^*_\lambda, \ \ \ \ \ \forall \ t \in [0,\hat t_\lambda].
\Ees
This and \eqref{e:HLamUp} further implies
\Be  \label{e:VztUpBou}
\begin{split}
v^{\beta}(z,t) \le R^{-n} \left[h(0)+\beta (\lambda-1) \gamma^{\beta+p-1-r} \lambda^{-q}  \xi_0^{s-q+1}\right], \ \ \ \ \ \forall \ t \in [0,\hat t_\lambda], \ z \in [R,1].
\end{split}
\Ee
Since $\hat t_\lambda \le t_\lambda \le 1$, by \eqref{e:KtBou}, we have
\Be
{K(t) \le \xi_0^{-q} \exp \left({3\over 2}q+ q B^*_{1}\right),  \ \ \ \ \ \ t \in [0,\hat t_\lambda].}
\Ee
We can extend Eq. \eqref{e:WEqn} from the time interval $[0,\hat t_\lambda]$ to $[0,1]$ by
\Bes
K(t) v^{p-1}(z,t)=K(\hat t_\lambda) v^{p-1}(z, \hat t_\lambda), \ \ \ \ \ \ \ \forall \ z \in [R,1) \ \ \forall \ t \in [\hat t_\lambda, 1].
\Ees
Now Eq. \eqref{e:WEqn} with $(z,t) \in [R,1) \times [0,1]$ have uniformly bounded coefficients.

On the other hand, as $t=0$, it is easy to check
$$|\p_z \phi(z)|=\alpha z^{-\alpha-1},  \ \ \ \ \ \ z \in [\delta,1],$$
$$|\p_z \phi(z)|=\alpha \delta^{-\alpha-2}z,  \ \ \ \ \ \ z \in [0,\delta].$$
Indeed, if $R \ge \delta$, then the first identity above implies
\
\Be  \label{e:DPhiBou}
{|\p_z v_0(z)| \le \alpha\gamma  R^{-\alpha-1}},  \ \ \ \ \ \ z \in [R,1];
\Ee
if $R<\delta$, then the second identity above implies \eqref{e:DPhiBou} as well. Hence,
$$|\p_z v_0(z)| \le \alpha \gamma R^{-\alpha-1}, \ \ \ \ \ \ \ \ z \in [0,1].$$

\noindent So, by parabolic regularity (\cite{Lie96}), we immediately get the desired inequality.
\end{proof}

\begin{lem}   \label{l:HatT}
Assume $t_\lambda \le 1$. Let $p \ge r$ and $\frac{p-1}{r}>\frac{2}{n+2}$.
Let $\beta \in (0,1]$ be such that $p+\beta-1 \ge r$ holds.
For any $R \in (0,1)$,
we have
\  \
\Be  \label{e:BloTimEst}
\begin{split}
\hat t_\lambda \ge \frac{h^*_\lambda-h(0)-n \left(\frac{2 (h^*_\lambda)^{\ell}}{\e^* (k-1)}\right)^{\frac{1}{k-1}}
\frac{R^{n-\frac{2\beta}{k-1}}}{n-\frac{2 \beta}{k-1}}}{L(\beta,C_1,\lambda,\gamma, p,q,R)},
\end{split}
\Ee
where $k \in (1,p)$, $\ell \ge \frac k{\beta}$, $\e^*$ is defined by \eqref{e:EpsSel}, and
{\begin{eqnarray*}
&& L(\beta,C_1,\lambda, \gamma,p,q,R)\\
&:=&C_1 n\beta R^{n-1}\gamma^{\beta-1}
+ C_1^2\beta (1-\beta )  \gamma^{\beta-2}  +  \beta \xi_0^{-q} \exp\left({3\over 2 }qt_{\lambda} + qB^*_{t_{\lambda}}\right)  \left(\frac{h^*_{\lambda}}{R^{n}}\right)^{\beta+p-1\over\beta}
\end{eqnarray*}}
with $C_1$ being the number in Lemma \ref{l:GraV} (which depends on $R$).
\end{lem}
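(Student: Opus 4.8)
The plan is to bound $h(\hat t_\lambda)$ from above by splitting $h=h_1+h_2$ as introduced after \eqref{e:T2Def}, treating the inner piece $h_1$ with the pointwise estimate of Lemma~\ref{l:VztIn} and the outer piece $h_2$ with a differential inequality, and then to solve the resulting relation for $\hat t_\lambda$. Since $t_\lambda\le 1$ forces $\hat t_\lambda\le t_\lambda\le 1<\infty$, and since $h(0)<h^*_\lambda$ (the second summand in \eqref{e:HLam} is strictly positive, all its factors being positive and $\lambda>1$) while $h$ is continuous on $[0,\hat t_\lambda]$, the definition of $\hat t_\lambda$ together with \eqref{e:t2UpBou} gives $h(\hat t_\lambda)=h^*_\lambda$; this identity is what we will exploit.

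For the \emph{inner piece} I would apply Lemma~\ref{l:VztIn} with $\e=\e^*$ (this choice is optimal, since the bound there is decreasing in $\e$): for $t\in[0,\hat t_\lambda]$ and $z\in(0,\tfrac12]$, using also $h(t)\le h^*_\lambda$ from \eqref{e:t2UpBou}, one gets $v(z,t)\le\big(2(h^*_\lambda)^{\ell}/(\e^*(k-1))\big)^{1/(k-1)}z^{-2/(k-1)}$. Raising to the power $\beta\le 1$ and noting that the base $2(h^*_\lambda)^{\ell}/(\e^*(k-1))$ is $\ge 1$ — which follows from $\e^*\le\alpha(1+\tfrac\alpha2)^{-k}h^{\ell}(0)\le\alpha(1+\tfrac\alpha2)^{-k}(h^*_\lambda)^{\ell}$ and the elementary inequality $2(1+\tfrac\alpha2)^{k}\ge\alpha(k-1)$ — so that lowering the exponent to $\beta/(k-1)$ only decreases it, we obtain $v^{\beta}(z,t)\le\big(2(h^*_\lambda)^{\ell}/(\e^*(k-1))\big)^{1/(k-1)}z^{-2\beta/(k-1)}$. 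Integrating this over $B_R(0)$ in polar coordinates — legitimate provided $n>2\beta/(k-1)$, a constraint compatible with $k\in(1,p)$ precisely because $\tfrac{p-1}{r}>\tfrac{2}{n+2}$ lets $k$ be taken close to $p$ — yields $h_1(\hat t_\lambda)\le n\big(2(h^*_\lambda)^{\ell}/(\e^*(k-1))\big)^{1/(k-1)}\dfrac{R^{\,n-2\beta/(k-1)}}{n-2\beta/(k-1)}$.

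For the \emph{outer piece} I would differentiate $h_2$ in $t$ and integrate by parts over the annulus $B_1(0)\setminus B_R(0)$; the boundary term on $\{z=1\}$ vanishes by the Neumann condition, leaving
$$\frac{\dif h_2}{\dif t}=\frac{\beta}{|B_1(0)|}\left[\int_{\partial B_R(0)}v^{\beta-1}(-\p_z v)\,\dif S+(1-\beta)\int_{B_1(0)\setminus B_R(0)}v^{\beta-2}|\nabla v|^2\,\dif x+K(t)\int_{B_1(0)\setminus B_R(0)}v^{\beta+p-1}\,\dif x\right].$$
On $[R,1]$ we have $|\p_z v|\le C_1$ by Lemma~\ref{l:GraV} and $v\ge\gamma$ by Lemma~\ref{l:ULowBou}(i), so $v^{\beta-1}\le\gamma^{\beta-1}$ and $v^{\beta-2}\le\gamma^{\beta-2}$; Lemma~\ref{l:ULowBou}(iii) with \eqref{e:t2UpBou} gives $v^{\beta}(z,t)\le z^{-n}h(t)\le R^{-n}h^*_\lambda$, hence $v^{\beta+p-1}\le(h^*_\lambda/R^{n})^{(\beta+p-1)/\beta}$; and \eqref{e:KtBou} with $\theta=t_\lambda$ gives $K(t)\le\xi_0^{-q}\exp(\tfrac32 q t_\lambda+q B^{*}_{t_\lambda})$. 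Inserting these and using $|\partial B_R(0)|=nR^{n-1}|B_1(0)|$ gives exactly $\dfrac{\dif h_2}{\dif t}\le L(\beta,C_1,\lambda,\gamma,p,q,R)$, whence $h_2(\hat t_\lambda)\le h_2(0)+L\hat t_\lambda\le h(0)+L\hat t_\lambda$.

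Adding the two estimates to $h^*_\lambda=h_1(\hat t_\lambda)+h_2(\hat t_\lambda)$ and solving for $\hat t_\lambda$ yields \eqref{e:BloTimEst}. The step that needs the most care is the inner estimate: forcing the singular integral $\int_{B_R(0)}|z|^{-2\beta/(k-1)}\,\dif z$ to converge requires $k>1+\tfrac{2\beta}{n}$, and it is the joint compatibility of this with $k<p$, $\ell\ge k/\beta$, $\beta\in(0,1]$ and $p+\beta-1\ge r$ that forces the exponent hypothesis $\tfrac{p-1}{r}>\tfrac{2}{n+2}$; the remainder is routine integration by parts plus the a priori bounds already established in Lemmas~\ref{l:ULowBou}, \ref{l:VztIn}, \ref{l:GraV} and \eqref{e:KtBou}.
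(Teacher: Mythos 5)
Your proposal is correct and follows essentially the same route as the paper: the split $h=h_1+h_2$, the inner bound via Lemma \ref{l:VztIn} with $\e=\e^*$ and the convergence condition $n(k-1)>2\beta$ secured by $\frac{p-1}{r}>\frac{2}{n+2}$, the outer differential inequality via integration by parts with Lemmas \ref{l:ULowBou}, \ref{l:GraV} and \eqref{e:KtBou}, and then solving $h(\hat t_\lambda)\ge h^*_\lambda$ for $\hat t_\lambda$. Your extra observation that the base $2(h^*_\lambda)^{\ell}/(\e^*(k-1))\ge 1$ (so the exponent $\beta/(k-1)$ may be raised to $1/(k-1)$) is a nice touch that reconciles the paper's intermediate estimate \eqref{e:H1Est} with the exponent appearing in \eqref{e:BloTimEst}.
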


\begin{rem}
We can tune the number $R$ such that the right hand of \eqref{e:BloTimEst} is strictly large than $0$ and
make the claim $\hat t_\lambda>\hat \theta>0$ be true.
\end{rem}

\begin{proof}
Recall that
$$h(t)=h_1(t)+h_2(t)$$
where
\Bes
h_1(t)=\frac{1}{|B_1(0)|} \int_{B_R(0)}  v^{\beta}(x,t) \dif x, \ \ \ \ \ \ \ \ h_{2}(t)=\frac{1}{|B_1(0)|} \int_{B_1(0) \setminus B_R(0)}  v^{\beta}(x,t) \dif x.
\Ees
with $R$ being some number to be chosen. By Lemma \ref{l:VztIn}, we have
\
\Be  \label{e:VUpBou4.6}
v(z,t) \le \left(\frac{2 h^{\ell}(t)}{\e^*(k-1)}\right)^{\frac{1}{k-1}} z^{-\frac{2}{k-1}}, \ \
\ \ \ \ \ \ \forall \ t \in [0,\hat t_\lambda] \ \ \forall \ z \in (0,\frac 12].
\Ee
Since $\frac{p-1}{r}>\frac{2}{n+2}$,
we have
\
$$n(p-1)>2(r+1-p).$$
Thanks to the condition $p+\beta-1 \ge r$ with $\beta \in (0,1]$, we can choose some $\beta \in (0,1]$ so that
\
$$n(p-1)>2 \beta \ge 2(r+1-p).$$

\noindent Therefore, we can choose some $k \in (1,p)$ so that
$$n(k-1)>2 \beta.$$
Hence, for any $t \in [0,\hat t_\lambda]$, by \eqref{e:VUpBou4.6} and \eqref{e:t2UpBou}, we have
\
\Be  \label{e:H1Est}
\begin{split}
h_1(t)&=n \int_0^R  v^{\beta}(z,t) z^{n-1} \dif z \\
& \le n \left(\frac{2 h^{\ell}(t)}{\e^*(k-1)}\right)^{\frac{\beta}{k-1}} \int_0^R z^{n-1-\frac{2\beta}{k-1}} \dif z \\
&=n \left(\frac{2 h^{\ell}(t)}{\e^*(k-1)}\right)^{\frac{\beta}{k-1}} \frac{R^{n-\frac{2\beta}{k-1}}}{n-\frac{2 \beta}{k-1}} \\
& \le n \left(\frac{2 (h^*_\lambda)^{\ell}}{\e^*(k-1)}\right)^{\frac{\beta}{k-1}} \frac{R^{n-\frac{2\beta}{k-1}}}{n-\frac{2 \beta}{k-1}}.
\end{split}
\Ee

\noindent Now we consider $h_2(t)$, by \eqref{e:VEqn}, it is easy to see
\
{\begin{eqnarray*}
{d\over dt}h_2(t)
&=&{d\over dt}{1\over |B_1(0) |}\int_{B_1(0)\setminus B_R(0)}v^{\beta}(x,t)dx\\
&=&{\beta\over |B_1(0) |}\int_{B_1(0)\setminus B_R(0)} v^{\beta-1} \left(\Delta v + K(t)v^p\right)dx\\
&=& -n\beta R^{n-1}v^{\beta-1}(R,t)v_z(R,t)
-{\beta (\beta-1)\over |B_1(0) |}\int_{B_1(0)\setminus B_{R}(0)} v^{\beta-2} |\nabla v |^2 dx\\
& & +  \beta K(t) {1\over |B_1(0) |}\int_{B_1(0)\setminus B_{R}(0)} v^{\beta+p-1}dx.
\end{eqnarray*}}
By (i) of Lemma \ref{l:ULowBou} and Lemma \ref{l:GraV}, we have $v^{\beta-1} \le \gamma^{\beta-1}$, $v^{\beta-2} \le \gamma^{\beta-2}$ and
\
{\begin{eqnarray*}
\left|{d\over dt}h_2(t)\right|
&\leq & n\beta R^{n-1}\gamma^{\beta-1}(R,t) |v_z(R,t)|
+ {\beta (1-\beta )\over |B_1(0) |} \gamma^{\beta-2}\int_{B_1(0)\setminus B_{R}(0)}  |\nabla v |^2 dx\\
& & +  \beta \frac{e^{-(p-1)t}}{\xi^q(t)} {1\over |B_1(0) |}\int_{B_1(0)\setminus B_{R}(0)} v^{\beta+p-1}dx.
\end{eqnarray*}}
By (iii) of Lemma \ref{l:ULowBou}, we have $v^\beta(z,t) \le \frac{h(t)}{z^{n}}$. This and \eqref{e:t2UpBou} further give
\
{\begin{eqnarray}\label{e:DH2Est}
\left|{d\over dt}h_2(t)\right|
&\leq & C_1 n\beta R^{n-1}\gamma^{\beta-1}
+ C_1^2\beta (1-\beta )  \gamma^{\beta-2} \cr
& & +  \beta  e^{-(p-1)t}  \left(\hat{\xi}(t)e^{-{3\over 2}t+B_t}\right)^{-q} {1\over |B_1(0) |}\int_{B_1(0)\setminus B_{R}(0)} \left(\frac{h(t)}{z^{n}}\right)^{\beta+p-1\over\beta}dx\cr
&\leq & C_1 n\beta R^{n-1}\gamma^{\beta-1}
+ C_1^2\beta (1-\beta )  \gamma^{\beta-2} \cr
& & +  \beta \xi_0^{-q} \exp\left({3\over 2 }qt_{\lambda} + qB^*_{t_{\lambda}}\right)  \left(\frac{h^*_{\lambda}}{R^{n}}\right)^{\beta+p-1\over\beta} \\
& :=& L(\beta, C_1,\lambda,\gamma,p,q, R )\nonumber
\end{eqnarray}}
for all $t \in [0,\hat t_\lambda]$.
\

 By the definition of $\hat t_\lambda$, \eqref{e:H1Est} and \eqref{e:DH2Est}, we have
\
\Bes
\begin{split}
h^*_\lambda-h(0) & \le h(\hat t_\lambda)-h(0) \\
& \le h_1(\hat t_\lambda)+h_2(\hat t_\lambda)-h_2(0) \\
& \le n \left(\frac{2 (h^*_\lambda)^{\ell}}{\e^*(k-1)}\right)^{\frac{1}{k-1}} \frac{R^{n-\frac{2\beta}{k-1}}}{n-\frac{2 \beta}{k-1}}
+\int_0^{\hat t_\lambda} \left|\frac{\dif}{\dif s} h_2(s)\right| \dif s \\
& \le n \left(\frac{2 (h^*_\lambda)^{\ell}}{\e^*(k-1)}\right)^{\frac{1}{k-1}} \frac{R^{n-\frac{2\beta}{k-1}}}{n-\frac{2 \beta}{k-1}}
+\hat t_\lambda L(\beta,C_1,\lambda,\gamma, p,q,R).
\end{split}
\Ees
This immediately implies the desired inequality.
\end{proof}

\begin{proof} [Proof of Theorem \ref{t:GBlUp}]
To prove the theorem, we shall consider the two cases: (i) the case $t_{\lambda} \ge 1$ and (ii) the case $t_\lambda < 1$.

(i) \emph{$t_{\lambda} \ge 1$}. Take $\theta=1$ in Section \ref{s:Easy}, we immediately get the desired estimate by Theorem \ref{t:BUpGe1}.

(ii) \emph{$t_{\lambda}<1$}. By \eqref{e:HLam}, it is easy to see that if $t_\lambda < 1$ we have
\
\Be \label{e:h-h0}
h^*_\lambda \ge h(0)+\beta (\lambda-1) \lambda^{-q} \gamma^{p+\beta-1-r} \exp\left(-p-(s+q+1) (\frac {3}2+B^*_1)\right) \xi_0^{s-q+1}.
\Ee
Recalling \eqref{e:HLamUp} as below:
\
\Be  \label{e:hup}
h^*_\lambda \le h(0)+\beta (\lambda-1) \gamma^{p+\beta-1-r} \lambda^{-q}  \xi_0^{s-q+1}.
\Ee
The estimate \eqref{e:BloTimEst}, together with \eqref{e:h-h0} and \eqref{e:hup},
implies that there exists some $R \in (0,1)$ (which can be tuned according to $p,q, \lambda,B^*_1, s,\lambda, \gamma, \beta, \xi_0$) and
some some $\hat \theta$ (depending on $\beta, p,q, \lambda, \gamma, B^*_1, s, R$)
such that
\  \
\Bes
\begin{split}
\hat t_\lambda \ge \hat \theta>0.
\end{split}
\Ees
$\hat \theta \in (0,1)$ is obvious. Since $t_\lambda \ge \hat t_\lambda$, we have $t_\lambda \ge \hat \theta$. Now we can use Theorem \ref{t:BUpGe1} to get the desired result.
\end{proof}


\begin{thebibliography}{999}

\bibitem{Cho09} P.L. Chow: Nonlinear stochastic wave equations: blow-up of second moments in L2-norm. Ann. Appl. Probab. 19 (2009), no. 6, 2039-2046.

\bibitem{Cho11} P.L. Chow: Explosive solutions of stochastic reaction-diffusion equations in mean Lp-norm. J. Differential Equations 250 (2011), no. 5, 2567-2580.

\bibitem{ChKh14} P.L. Chow and R. Khasminskii: Almost sure explosion of solutions to stochastic differential equations. Stochastic Process. Appl. 124 (2014), no. 1, 639-645

 \bibitem{ChLi12} P.L. Chow and K. Liu: Positivity and explosion in mean Lp-norm of stochastic functional parabolic equations of retarded type. Stochastic Process. Appl. 122 (2012), no. 4, 1709-1729.






\bibitem{dpz92} G. Da Prato and J. Zabczyk: Stochastic equations in infinite dimensiions, Encyclpedia of Mathematics and its Applications 45, Cambridge Press (1992).


\bibitem{FrMc85}  A. Friedman and J.B. McLeod: Blow-up of positive solutions of semilinear heat equations, {\it Indiana Univ. Math. J.} 34 (1985), no. 2, 425-447.


\bibitem{gm1} A. Gierer and H. Meinhardt: A theory of biological pattern
formation, Kybernetik (Berlin) 12 (1972), 30-39.





\bibitem{HuYi95} B. Hu and H.M. Yin: Semilinear parabolic equations with prescribed energy, {\it Rend. Circ. Math. Palermo}, 44 (1995), no. 3, 479-505.

\bibitem{iw81} N. Ikeda and S. Watanabe: Stochastic Differential Equations and Diffusion Processes,
North-Holland Publishing Co., Amsterdam, (1981).

\bibitem{j1} H. Jiang: Global existence of solutions of an activator-inhibitor system,  Discrete Contin. Dyn. Syst. 14 (2006), 737-751.

\bibitem{KaSh91} I. Karatzas and S. E. Shreve: Brownian motion and stochastic calculus (2nd edition),  Graduate texts in Mathematics (1991), Springer.

\bibitem{k1} J. P. Keener: Activators and inhibitors in pattern formation, Stud. Appl. Math. 59 (1978), 1-23.


\bibitem{ks10} J.Kelkel and C.Surulescu: On a stochastic reaction-diffusion system modeling pattern formation on seashells, J. of Mathematical Biology (6) 60 (2010), 765-796.



\bibitem{kri08} K. Kristiansen: Reaction-diffusion models in mathematical biology, Master thesis, Technology University of Denmark.


\bibitem{ln1} F. Li and W. M. Ni: On the global existence and finite time blow-up of shadow systems, J. Differential Equations 247 (2009), 1762-1776.

\bibitem{LiYi14} F. Li and N. K.Yip: Finite Time Blow-Up of Parabolic Systems with Nonlocal Terms,  to appear in Indiana University Mathematics Journal.

\bibitem{Lie96} G.M. Lieberman:  Second Order Parabolic Differential Equations, World Scientific, 1996.

\bibitem{MT} K. Masuda and K. Takahashi, Reaction-diffusion systems in the Gierer-Meinhardt theory of biological pattern formation, {\it Japan J. Appl. Math.} 4 (1987), no. 1, 47-58.

\bibitem{Mue96} C. Mueller: Singular initial conditions for the heat equation with a noise term. Ann. Probab. 24 (1996), no. 1, 377¡V398.


\bibitem{MuSo93} C. Mueller, Carl and R. Sowers: Blowup for the heat equation with a noise term. Probab. Theory Related Fields 97 (1993), no. 3, 287¡V320.

\bibitem{nst} W. M. Ni, K. Suzuki and I. Takagi: The dynamics of a kinetic activator-inhibitor system, J. Differential Equations 229 (2006), 426-465.

\bibitem{nish2} Y. Nishiura, Global structure of bifurcating solutions of some
reaction-diffusion systems, SIAM J. Math. Anal. 13 (1982), 555-593.


\bibitem{Pha12} T.V. Phan: A remark on global existence of solutions of shadow systems. Z. Angew. Math. Phys. 63 (2012), no. 2, 395-400.






\bibitem{r1} F. Rothe, Global Solutions of Reaction-Diffusion Systems, Lecture Notes in Mathematics, Vol. 1072, Springer-Verlag Berlin Heidelberg, 1984.



\bibitem{t1} A. M. Turing,  The chemical basis of morphogenesis, Phil. Trans. Roy. Soc. Lond. B 237 (1952), 37-72.

\bibitem{ww1} J. Wei and M. Winter, Mathematical Aspects of Pattern Formation in Reaction-Diffusion Systems, Vol. 189, Springer-Verlag London, 2014.

\bibitem{WXZZ14} M. Winter, L. Xu, J. Zhai and T. Zhang: The dynamics of the stochastic shadow Gierer-Meinhardt System, arXiv:1402.4987.

\end{thebibliography}
\end{document}